\numberwithin{equation}{section}
\newtheorem{theorem}{Theorem}[section]
\newtheorem{lemma}[theorem]{Lemma}
\newtheorem{corollary}[theorem]{Corollary}
\newtheorem{remark}[theorem]{Remark}
\newcommand{\cl}[1]{\mathcal{#1}} 
\newcommand{\bb}[1]{\mathbb{#1}}
\newcommand{\sca}[1]{\left(#1\right)} 
\def\gl{\lambda}
\newcommand\supp{\mathop{\rm supp}}
\begin{document}

\title[Transference and preservation]{Synthetic properties of locally compact groups: 
preservation and transference}

\author{M. Anoussis,  G. K. Eleftherakis, A. Katavolos}

\address{M. Anoussis\\Department of Mathematics\\ University of the Aegean\\
832 00 Karlovassi, Greece }\email{mano@aegean.gr}

\address{G. K. Eleftherakis\\Department of Mathematics\\ Faculty of Sciences\\
University of Patras\\265 00 Patra, Greece }
\email{gelefth@math.upatras.gr}

\address{A. Katavolos\\Department of Mathematics\\
National and Kapodistrian University of Athens\\
1578 84 Athens, Greece }\email{akatavol@math.uoa.gr}

\thanks{2010 {\it Mathematics Subject Classification.} 43A05, 43A22, 43A45, 47L05} 
\keywords{Group homomorphism, Haar measure, Fourier algebras, Spectral synthesis, 
Operator synthesis, MASA bimodule}

\begin{abstract}  
Using techniques from TRO equivalence of masa bimodules we prove various transference results: 
We show that when $\alpha$  is a group homomorphism which pushes forward the Haar 
measure of $G$ to 
a measure absolutely continuous with respect to the Haar measure on $H$, 
then $(\alpha\times\alpha)^{-1}$
preserves sets of compact operator synthesis, and conversely when  $\alpha$  is onto.
We also prove similar preservation results for operator Ditkin sets and operator M-sets, obtaining 
preservation results for M-sets as corollaries. 
Some of these results extend or complement existing results of 
Ludwig, Shulman, Todorov and Turowska.
\end{abstract}

\maketitle

\section{Introduction}\label{10000}

In this paper we study 
preservation  of synthetic sets, $M$-sets  and Ditkin sets by group 
homomorphisms between locally compact groups  both 
in the classical and in the operator sense.

In the early seventies,  Arveson  discovered important   connections between
spectral synthesis and  invariant subspace theory and established
precise links between notions in harmonic analysis and in operator algebra theory
\cite{arv}. 

The failure of spectral synthesis for a locally compact group $G$ can be formulated as the existence of 
distinct closed ideals of the Fourier algebra of $G$ having the same support. 

It  was Arvseon in his seminal paper \cite{arv} who used this phenomenon to produce the first example 
of two distinct weak-* closed subalgebras on an $L^2(X,\mu)$ space containing 
the multiplication algebras 
of $L^\infty(X,\mu)$ which have the same `support', a closed subspace of $X\times X$. 

This was later formalised, in the commutative case, by J. Froelich \cite{F} and, in the general 
locally compact case, by J. Ludwig and L. Turowska \cite{lt}.

The theory initiated by Arveson has been   significantly developed 
by many authors \cite{akt, eks, F, lt, st, stwt, SSTT, tt}. 
Our work is a contribution to this circle of ideas. 

Let $G$ and $H$ be locally compact  second countable groups with Haar measures $\mu $ 
and $\nu $ respectively and let $\theta: G \rightarrow H$ be  a continuous homomorphism.
To study preservation properties with respect to $\theta$, 
we shall use a method based on  TRO-equivalence. 
This notion was introduced in \cite{ele} by G.K. Eleftherakis, and used in 
\cite{ele2} for the study of
masa-bimodules and in \cite{elee} for the study of problems of operator synthesis. 
In sections \ref{ontocomp}, \ref{sec4}  and \ref{msets} 
we prove  preservation  results for  operator synthetic sets, operator Ditkin sets  
and operator $M$-sets. 
The main idea of our method is the following:  
to prove the preservation  of   a property  from $H$ to $G$, we first
 express  the property in terms of  appropriate masa-bimodules. 
Then we show that the masa-bimodules associated to $H$ are TRO-equivalent 
to the ones associated to $G$, and
hence we obtain the preservation of the property we are interested in.

In fact we use this method for more general maps between standard measure spaces.

The concept of reduced spectral synthesis as well as its operator theoretic analogue are
introduced and studied in \cite{stwt2}. 
In particular,  an inverse image theorem for sets of compact operator synthesis   
follows from  \cite[Theorem 4.7]{stwt2}
under the assumption that    $\theta$ has a special form.

In section \ref{ontocomp},  we prove Theorem \ref{49}, which implies an inverse image theorem 
for sets of compact operator synthesis  for general $\theta$ 
assuming that $\theta_*(\mu)$  is absolutely continuous with respect to $\nu$.

We also  show in section \ref{sec4} that if $\theta$ is onto then an $\omega$-closed set
$\kappa\subseteq H\times H$ (see below for these notions) 
is a set of (compact) operator synthesis  
if and only if $(\theta\times\theta)^{-1}(\kappa)$ 
is a set of (compact) operator synthesis.

Note that it follows from  \cite[Theorem 4.7]{st}  that if 
$\theta_*(\mu)$ is absolutely continuous with respect  to $\nu$
and $\kappa\subseteq H\times H$ is an 
$\omega$-closed set of  operator synthesis,  then  $(\theta\times\theta)^{-1}(k)$ 
is a set of  operator synthesis.

In section \ref{ditkin} we prove a preservation result for  operator Ditkin sets
under the assumption that   $\theta$ has open image  and  compact kernel.  

In section \ref{msets} we study preservation and transference properties of $M$-sets.
These were introduced for general locally compact groups by Bo\.{z}ejko in \cite{bo}.
Shulman, Todorov and Turowska in \cite{stwt}  introduced the notion of 
$M_1$ sets in locally compact groups   and studied transference and preservation 
properties of $M$-sets and $M_1$-sets  and their operator analogues, 
which were introduced in \cite{stwtint}. 
Our results in section \ref{msets} complement and improve
some of the results of \cite{stwt}. 

We show that  if $\theta$ is  onto, an $\omega$-closed set
$\kappa\subseteq H\times H$ is an operator $M$-set  (resp. $M_1$-set)  if and only if 
$(\theta\times\theta)^{-1}(\kappa)$ is an operator $M$-set  (resp. $M_1$-set).
As a corollary we obtain, for a closed set $E\subseteq H$, that 
 $\theta^{-1}(E)$ is an $M$-set (resp. $M_1$-set) if and only if $E$  is an $M$-set (resp. $M_1$-set).

Related results were obtained in \cite{stwt} 
under the  assumption that $\theta_*(\mu)$ has a Radon-Nikodym derivative with 
respect to $\mu$ which is $\mu$-a.e. finite, which in turn is equivalent to 
the compactness of $\ker\theta$ (see Corollary \ref{26}). 
\bigskip

We will need some preliminaries. For more details, see for example \cite{akt} or
 \cite[Section 4]{stwt2}. 
  
Recall that a standard Borel measure space   $(X,\mu)$ is a measure space such that $X$ is Borel 
isomorphic to a Borel subset of a Polish space and $\mu$ is a regular $\sigma$
Borel measure on $X$ \cite[12.5]{kechr}. 

If $(X,\mu)$ and $(Y,\nu)$ are standard measure spaces, 
a subset $\kappa\subseteq X\times Y$ is called {\it marginally null} if
$\kappa\subseteq (M\times Y)\cup(X\times N)$, 
where $M\subseteq X$ and $N\subseteq Y$ are null.
A subset $\kappa$ of $X\times Y$ is called {\it $\omega$-open} if it is
marginally equivalent to (i.e. differs by a marginally null set from) 
the union of a countable set of Borel rectangles.
The complements of $\omega$-open sets are called {\it $\omega$-closed}. 

If  $\theta: X\rightarrow Y$ is a Borel map and $\mu$ is a measure on $X$,  we
 denote by  $\theta _*(\mu )$ the measure on $Y$ defined 
by $\theta _*(\mu )(E)= \mu (\theta^{-1}(E ))$ for  every Borel set $E \subseteq Y.$
 
If $H_1$ and $ H_2$ are Hilbert spaces, we write  $\cl B(H_1,H_2)$ for the bounded linear 
operators from $H_1$ to $H_2$. When $H_1=L^2(X,\mu)$ and $H_2=L^2(Y,\nu)$, 
we will call a subspace  $\cl W\subseteq\cl B(H_1,H_2)$  
a {\it masa-bimodule} if $M_gTM_f \in \cl  W$  
for all $T\in \cl W$, $f\in L^{\infty}(X,\mu)$ and $g\in L^{\infty}(Y,\nu)$, where 
$M_g$ and $M_f$ denote the corresponding multiplication operators.
The w*-closed masa bimodule generated by a set $\cl U\subseteq \cl B(H_1,H_2)$ will be denoted
${\rm Bim}(\cl U)$.

We say that a measurable subset $\kappa\subseteq X\times Y$
{\it supports} an operator $T\in \cl B(H_1,H_2)$
(or that $T$ {\it is supported by} $\kappa$)
if $P(F)TP(E) = 0$ whenever the rectangle $E\times F$ is
marginally disjoint from $\kappa$, and write
$$
{M}_{\max}(\kappa) = 
\left\{T\in \cl B(H_1,H_2) : T \mbox{ is supported by } \kappa\right\}.
$$
For any subset ${\mathcal W}\subseteq \cl B(H_1,H_2)$, there exists a smallest
(up to marginal equivalence) $\omega$-closed set
$\supp(\cl W)\subseteq X\times Y$ which supports every operator $T\in{\mathcal W}$ \cite{eks}.

By \cite{arv} and \cite{st}, for any $\omega$-closed set $\kappa$, there exists a
weak* closed bimodule ${M}_{\min}(\kappa)$ such that, 
for every weak* closed masa bimodule  $ M\subseteq \cl B(H_1,H_2)$ 
with support marginally equivalent to $\kappa$ we have that
$${M}_{\min}(\kappa)\subseteq {M}\subseteq {M}_{\max}(\kappa).$$


\section{TRO-equivalence and preservation}\label{ontocomp}

The main results of this section are Theorem \ref{equiv} and Theorem \ref{48}, 
where we use techniques from the TRO equivalence developed in \cite{ele}. 
We use Theorem \ref{48} to extend an inverse image theorem for sets of compact operator synthesis
due to  Shulman, Todorov and Turowska \cite[Theorem 4.7]{stwt2}.
We will    apply Theorem \ref{equiv}   to preservation results for group homomorphisms in 
sections \ref{sec4} and \ref{msets}.

\smallskip
 We begin with a general Lemma which is  perhaps of independent interest. 

\smallskip
If $\cl U$ is a subspace of operators acting on a Hilbert space we denote by 
$\cl U\cap \cl K$ its subspace of compact operators.

Let $H, K$ be Hilbert spaces and  $\cl M$  a norm-closed linear subspace of $ \cl B(H, K).$ 
The space $\cl M$ is  called an  essential ternary ring of operators (TRO) if   
 $\cl M\cl M^*\cl M\subseteq \cl M$ (TRO property) and  
$I_{H}\in \overline{[\cl M^*\cl M]}^{w^*}$ and 
$I_{K}\in \overline{[\cl M\cl M^*]}^{w^*}$ (essentiality). 

\begin{lemma}\label{47} Let $H_i, K_i$ be Hilbert spaces and $\cl M_i\subseteq \cl B(H_i, K_i), i=1,2$ 
be essential ternary rings of operators. Let  $\cl U$ be a 
w*-closed subspace of $ \cl B(H_1, H_2)$. We set  
$$F(\cl U)= \overline{[\cl M_2 \cl U\cl M_1^*]}^{w^*} \quad\text{and }\quad 
F_0(\cl U\cap \cl K)=\overline{[\cl M_2(\cl U\cap \cl K)\cl M_1^*]}^{\|\cdot\|}.$$ 
Then $$F_0(\cl U\cap \cl K)=F(\cl U)\cap \cl K\, .$$
\end{lemma}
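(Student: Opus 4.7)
The plan is to prove the two inclusions separately; one is routine, the other is the substance of the lemma.

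\emph{Easy inclusion.} The containment $F_0(\cl U\cap\cl K)\subseteq F(\cl U)\cap\cl K$ is clear: the norm closure is contained in the weak* closure, giving $F_0(\cl U\cap\cl K)\subseteq F(\cl U)$; and each product $ATB^*$ with $T\in\cl U\cap\cl K$, $A\in\cl M_2$, $B\in\cl M_1$ is compact, so the norm closure $F_0(\cl U\cap\cl K)$ stays inside $\cl K$.

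\emph{Hard inclusion.} For $T\in F(\cl U)\cap\cl K$ the idea is to use essentiality of the TROs to produce contractive approximate identities, then convert weak* to norm approximation via compactness of $T$. By essentiality, $I_{K_2}\in\overline{[\cl M_2\cl M_2^*]}^{w^*}$ and the $*$-algebra $[\cl M_2\cl M_2^*]$ acts non-degenerately on $K_2$, so Kaplansky density yields a contractive net $(e_\alpha)\subseteq[\cl M_2\cl M_2^*]$ converging strong* to $I_{K_2}$; similarly, an analogous net $(f_\beta)\subseteq[\cl M_1\cl M_1^*]$ converges strong* to $I_{K_1}$. Compactness of $T$ forces $e_\alpha Tf_\beta\to T$ in operator norm, so it suffices to show each $e_\alpha Tf_\beta\in F_0(\cl U\cap\cl K)$. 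Expanding $e_\alpha=\sum_i P_iQ_i^*$ and $f_\beta=\sum_j R_jS_j^*$ with $P_i,Q_i\in\cl M_2$ and $R_j,S_j\in\cl M_1$ gives
$$e_\alpha Tf_\beta=\sum_{i,j}P_i\,(Q_i^*TR_j)\,S_j^*,$$
where each $Q_i^*TR_j\in\cl B(H_1,H_2)$ is automatically compact.

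\emph{Main obstacle.} The crux is to show each $Q_i^*TR_j$ lies in $\cl U$; granted this, the summands belong to $\cl M_2(\cl U\cap\cl K)\cl M_1^*$ and the lemma follows by passing to the norm limit in $\alpha,\beta$. Starting from a defining weak* approximation $T=\mathrm{w^*\text{-}}\lim\sum_k A_kV_kB_k^*$ with $A_k\in\cl M_2$, $V_k\in\cl U$, $B_k\in\cl M_1$, one gets $Q_i^*TR_j$ as a weak* limit of $\sum_k(Q_i^*A_k)V_k(B_k^*R_j)$, with $Q_i^*A_k\in[\cl M_2^*\cl M_2]$ and $B_k^*R_j\in[\cl M_1^*\cl M_1]$. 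Two ingredients must then be combined: the TRO identities $\cl M_j\cl M_j^*\cl M_j\subseteq\cl M_j$, which allow absorbing parts of $e_\alpha$ and $f_\beta$ into the outer TROs so that $e_\alpha Tf_\beta$ is a weak* limit of $\sum_k\tilde A_kV_k\tilde B_k^*$ with $\tilde A_k\in\cl M_2$ and $\tilde B_k\in\cl M_1$; and the compactness of $e_\alpha Tf_\beta$, which must be used to upgrade this weak* approximation to a norm approximation by representatives chosen from $[\cl M_2(\cl U\cap\cl K)\cl M_1^*]$. Making this upgrade precise---converting weak* convergence of a bounded net to a compact operator into a norm-approximation by elements built from compact pieces of $\cl U$---is the technical heart of the argument, and it is what really exploits the TRO-equivalence structure connecting $\cl U$ and $F(\cl U)$.
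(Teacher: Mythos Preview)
Your overall framework---the easy inclusion, the contractive approximate identities in $[\cl M_i\cl M_i^*]$ obtained from essentiality, and the norm convergence $e_\alpha T f_\beta\to T$ via compactness---is exactly the paper's approach. But the proof has a genuine gap at precisely the point you label the ``main obstacle'': you never establish that $Q_i^*TR_j\in\cl U$, and the resolution you sketch does not work.

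Writing $T$ as a weak*-limit of sums $\sum_k A_kV_kB_k^*$ and absorbing pieces of $e_\alpha,f_\beta$ via the TRO identity only shows that $e_\alpha Tf_\beta$ is a weak*-limit of elements of $[\cl M_2\cl U\cl M_1^*]$, i.e.\ that $e_\alpha Tf_\beta\in F(\cl U)$---which you already knew, since $F(\cl U)$ is an $[\cl M_2\cl M_2^*]$--$[\cl M_1\cl M_1^*]$-bimodule. The ``upgrade'' you then ask for, converting this into a norm approximation by elements of $[\cl M_2(\cl U\cap\cl K)\cl M_1^*]$, is essentially the statement of the lemma itself; and in any case the approximants $\sum_k\tilde A_kV_k\tilde B_k^*$ are not in that set, since the $V_k$ have no reason to be compact.

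The paper short-circuits all of this by using the \emph{inverse} of $F$. By \cite[Proposition~2.11]{ele} one has
\[
\cl U=\overline{[\cl M_2^*\,F(\cl U)\,\cl M_1]}^{w^*},
\]
and since $\cl U$ is weak*-closed this yields directly $\cl M_2^*F(\cl U)\cl M_1\subseteq\cl U$. Thus for $T\in F(\cl U)\cap\cl K$ and $Q_i\in\cl M_2$, $R_j\in\cl M_1$ one gets $Q_i^*TR_j\in\cl U$ immediately, and it is compact; hence $e_\alpha Tf_\beta\in[\cl M_2(\cl U\cap\cl K)\cl M_1^*]$ outright, not merely as a limit. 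Your closing sentence even gestures at ``the TRO-equivalence structure connecting $\cl U$ and $F(\cl U)$'', which is exactly this inverse relation---you just needed to invoke it at the start rather than attempt a direct approximation argument.
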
 

\begin{proof} From \cite[Proposition 2.11]{ele} it follows that 
$$\cl U=\overline{[\cl M_2^*F(\cl U)\cl M_1]}^{w^*} $$
 and hence
$\cl M_2^*(F(\cl U)\cap \cl K)\cl M_1\subseteq \cl U\cap \cl K\, .$ Therefore 
\begin{equation}\label{****} 
\cl M_2\cl M_2^*(F(\cl U)\cap \cl K)\cl M_1\cl M_1^*\subseteq \cl M_2(\cl U\cap \cl K)\cl M_1^*\, .
\end{equation}
Since the $\cl M_i$ are essential TRO's, there exist \cite[Corollary 8.1.24]{bm}
nets of the form 
$$ x_\lambda =\sum_{i=1}^{s_\lambda }z_i^\lambda (z_i^\lambda )^*, \;\;z_i^\lambda \in \cl M_2,
\;\;\;y_\lambda =\sum_{i=1}^{t_\lambda }w_i^\lambda (w_i^\lambda )^*, \;\;w_i^\lambda \in \cl M_1,
\;\;\lambda \in \Lambda  
$$ 
such that $\|x_\lambda \|\leq 1$,  $\|y_\lambda \|\leq 1$ for all $\lambda$ and 
$$\mbox{SOT-}\lim x_\lambda =I_{K_2}, \;\;\mbox{SOT-}\lim y_\lambda =I_{K_1}.$$
If $x\in F(\cl U)\cap \cl K$ we have from (\ref{****})
$$x_\lambda xy_{\lambda^\prime }\in [\cl M_2(\cl U\cap \cl K)\cl M_1^*] ,\;\;\;
\text{for all }\; \;\lambda,\; \lambda ^\prime \;\in \Lambda .$$
Since $x$ is a compact operator the iterated norm limits 
$\lim_{\gl}(\lim_{\gl'}x_{\gl}xy_{\gl'})=x$ exist and we have 
$$x\in \overline{[\cl M_2(\cl U\cap \cl K)\cl M_1^*]}^{\|\cdot\|}. $$
We conclude that 
$F(\cl U)\cap \cl K\subseteq  \overline{[\cl M_2(\cl U\cap \cl K)\cl M_1^*]}^{\|\cdot\|}$ and since 
$\cl M_2(\cl U\cap \cl K)\cl M_1^*\subseteq F(\cl U)\cap \cl K$ we obtain
$$F(\cl U)\cap \cl K=  \overline{[\cl M_2(\cl U\cap \cl K)\cl M_1^*]}^{\|\cdot\|}=F_0(\cl U\cap \cl K). $$
\end{proof}

\bigskip

For the rest of this section  $(X_i,\mu _i), (Y_i,\nu _i)$, $i=1,2$ will denote  standard Borel  
measure spaces and  $\phi _i: X_i\rightarrow Y_i\, (i=1,2)$  will denote measurable maps
such that $(\phi _i)_*(\mu _i)\ll\nu _i.$ 

We define 
$$
\tilde \phi _i: L^\infty (Y_i)\rightarrow L^\infty (X_i), 
\;\;\;\tilde \phi _i(f)=f\circ \phi _i,\;\;\; i=1,2
$$ 
and the TRO's 
\begin{equation}\label{defN}
\cl N_i=\{T\in\cl B(L^2(Y_i),L^2(X_i)): TP_i(E )=Q_i(\phi _i^{-1}(E ))T, 
\;\;\text{for all }\; \; E\subseteq Y_i \;\;\text{Borel}\}
\end{equation}
(here $P_i(E )$ (resp. $Q_i( \phi _i^{-1}(E))$) is the projection onto 
$L^2(E)$ (resp. $L^2( \phi _i^{-1}(E))$)).
Fix Borel sets $E_i\subseteq Y_i, i=1,2$  such that 
$\ker \tilde{\phi _i}=L^\infty (Y_i\setminus E_i)$. Then the maps 
$$
L^\infty (E_i)\rightarrow L^\infty (X_i), \;\;f|_{E_i}\rightarrow f\circ \phi _i, \;\;\;i=1,2
$$ 
are 1-1 unital *-homomorphisms. We define, for $i=1,2$, the TRO's
\begin{equation}\label{defM}
\cl M_i=\{T: TP_i(E)=Q_i(\phi _i^{-1}(E))T, \;\;\text{for}\; \; E\subseteq E_i\;\; \text{Borel}\}
\subseteq\cl B(L^2(E_i), L^2(X_i))
\end{equation}
and we note that $\cl N_i= \cl M_iR_i$ where $R_i\in \cl B(L^2(Y_i))$ 
is the projection onto $L^2(E_i)$, $i=1,2.$

For $i=1,2$, let $\cl A_i\subseteq\cl B(L^2(X_i))$ be the commutant of the commutative 
von Neumann algebra 
$$\{M_{f\circ \phi _i}: \;\;f\in L^\infty (E_i)\}\, .$$
Then it follows from \cite[Theorem 3.2]{ele}  
that $\overline{[\cl M_i^*\cl A_i\cl M_i]}^{w^*}=D(E_i)$ and 
$\overline{[\cl M_iD(E_i)\cl M_i^*]}^{w^*}=\cl A_i$ for $i=1,2$ 
(where $D(E_i)\subseteq \cl B(L^2(E_i))$ 
denotes the multiplication algebra of $L^\infty(E_i)$). 

If $\cl U$ is a $ D(E_2)$-$ D(E_1)$-bimodule  we define  
$$
F(\cl U)=\overline{[\cl M_2\cl U\cl M_1^*]}^{w^*},
$$
where $M_i, i=1,2$ are as in \ref{defM}.

Then,  from \cite[Proposition 2.11]{ele} we obtain that the map 
$$
\cl U\to F(\cl U)
$$ 
maps the family of $ D(E_2)$-$ D(E_1)$-bimodules
contained in $\cl B(L^2(E_1), L^2(E_2))$ bijectively onto the family of $\cl A_2$-$\cl A_1$-bimodules 
contained in $\cl B(L^2(X_1), L^2(X_2)).$ 
The inverse map of $F$ is given by $F^{-1}(V)=\overline{[\cl M_2^*V\cl M_1]}^{w^*}$. 
\smallskip
 
For each masa bimodule  $\cl U\subseteq \cl B(L^2(Y_1), L^2(Y_2))$ we define 
$$F_r(\cl U)=F(R_2\cl UR_1).$$ 

\begin{theorem}\label{equiv} 
If $\kappa\subseteq Y_1\times Y_2$,  is an $\omega$-closed set, then
\begin{align*}
(i) \quad F_r(M_{\max}(\kappa)) &=F(M_{\max}(\kappa\cap (E_1 \times E_2 )))
=M_{\max}((\phi_1 \times \phi _2)^{-1}(\kappa)) \\
\text{and }\; \;
F_r(M_{\min}(\kappa)) &=F(M_{\min}(\kappa\cap (E_1 \times E_2 )))
=M_{\min}((\phi_1 \times \phi _2)^{-1}(\kappa)).
\end{align*}
(ii) 
If the measures $(\phi_i)_*(\mu_i)$ and $\nu_i$ are equivalent for  $i=1,2$, we  have 
\begin{align*}
M_{\max}(\kappa) &=
\overline{[\cl M_2^*M_{\max}((\phi_1 \times \phi _2)^{-1}(\kappa))\cl M_1]}^{w^*} \\
\text{and }\; \;M_{\min}(\kappa) &=
\overline{[\cl M_2^*M_{\min}((\phi_1 \times \phi _2)^{-1}(\kappa))\cl M_1]}^{w^*}.
\end{align*}
\end{theorem}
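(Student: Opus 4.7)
The approach is to prove (i) in two stages---a \emph{compression} to $E_1\times E_2$, followed by a \emph{support transfer} through the TROs $\cl M_i$---and then to deduce (ii) by inverting $F$ under the measure-equivalence assumption.

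For the first equality in each identity of (i), by the definition of $F_r$ it suffices to verify
\[ R_2 M_?(\kappa) R_1 = M_?(\kappa\cap (E_1\times E_2))\qquad (?\in\{\max,\min\}). \]
The case $?=\max$ is immediate: compression preserves support while intersecting with $E_1\times E_2$, and any $S\in M_{\max}(\kappa\cap (E_1\times E_2))$ satisfies $S=R_2 S R_1$ when viewed in $\cl B(L^2(Y_1),L^2(Y_2))$ by extension by zero. The case $?=\min$ follows from the identity $\supp(R_2\cl W R_1)=(\supp\cl W)\cap(E_1\times E_2)$, valid for any w*-closed masa bimodule $\cl W$, combined with the extremal characterization of $M_{\min}$.

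Setting $\kappa':=\kappa\cap(E_1\times E_2)$ and noting $(\phi_1\times\phi_2)^{-1}(\kappa)=(\phi_1\times\phi_2)^{-1}(\kappa')$ marginally (since $(\phi_i)_*\mu_i$ concentrates on $E_i$), the second equality in each identity amounts to $F(M_?(\kappa'))=M_?((\phi_1\times\phi_2)^{-1}(\kappa'))$. The core is the following \emph{Support Transfer Lemma}: (a) if $T$ is supported in $\kappa'$ and $m_i\in\cl M_i$, then $m_2 T m_1^*$ is supported in $(\phi_1\times\phi_2)^{-1}(\kappa')$; (b) dually, if $T'$ is supported in $(\phi_1\times\phi_2)^{-1}(\kappa')$ and $m_i\in\cl M_i$, then $m_2^* T' m_1$ is supported in $\kappa'$. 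For (a), given a Borel rectangle $A_1\times A_2\subseteq X_1\times X_2$ marginally disjoint from $(\phi_1\times\phi_2)^{-1}(\kappa')$, one removes $\mu_i$-null sets to get $A_i'\subseteq A_i$ with $\phi_1(A_1')\times\phi_2(A_2')\cap\kappa'=\emptyset$; the analytic sets $C_i:=\phi_i(A_i')\subseteq E_i$ are $\nu_i$-measurable and may be replaced by Borel $B_i\supseteq C_i$ with $B_i\setminus C_i$ $\nu_i$-null, so that $A_i\subseteq\phi_i^{-1}(B_i)$ marginally and $B_1\times B_2\cap\kappa'$ is marginally null; then, using the intertwining $Q_i(\phi_i^{-1}(B_i))m_i=m_i P_i(B_i)$,
\[ Q_2(A_2)\,m_2 T m_1^*\,Q_1(A_1)=Q_2(A_2)\,m_2 P_2(B_2)\,T\,P_1(B_1) m_1^*\,Q_1(A_1)=0. \]
Part (b) is easier: a Borel rectangle $B_1\times B_2\subseteq E_1\times E_2$ marginally disjoint from $\kappa'$ pulls back to a rectangle marginally disjoint from $(\phi_1\times\phi_2)^{-1}(\kappa')$, because $(\phi_i)_*\mu_i\ll\nu_i$ makes $\phi_i^{-1}$ preserve $\nu_i$-nullity.

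Taking w*-closures, (a) yields $F(M_{\max}(\kappa'))\subseteq M_{\max}((\phi_1\times\phi_2)^{-1}(\kappa'))$ and (b) yields $F^{-1}(M_{\max}((\phi_1\times\phi_2)^{-1}(\kappa')))\subseteq M_{\max}(\kappa')$; combining with the bijectivity of $F$ and its inverse $F^{-1}(\cl V)=\overline{[\cl M_2^*\cl V\cl M_1]}^{w^*}$ from \cite[Proposition 2.11]{ele} gives the $M_{\max}$ case. For $M_{\min}$, the same lemma together with the extremal characterization of $M_{\min}$ yields equality: since $(\phi_1\times\phi_2)^{-1}(\kappa')$ is fibre-saturated, $M_{\min}$ of it is automatically an $\cl A_2$-$\cl A_1$-bimodule, so one can apply $F^{-1}$ and compare with $M_{\min}(\kappa')$ through the bijection. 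Finally for (ii), the hypothesis $(\phi_i)_*\mu_i\sim\nu_i$ forces $E_i=Y_i$ modulo $\nu_i$-null sets, so $R_i=I$ and $F_r=F$; applying $F^{-1}$ to the equalities from (i) yields the formulas in (ii). I expect the main obstacle to be the descriptive-set-theoretic approximation in part (a) of the Support Transfer Lemma (passing from Borel rectangles in $X_1\times X_2$ to Borel sets in $E_i$ via analytic images), together with the exact-support verification needed in the $M_{\min}$ case.
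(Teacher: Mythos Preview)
Your proposal is correct and follows essentially the same route as the paper. The paper's own proof of part (i) is little more than a pointer to \cite[Theorem 2.4]{elee} together with the bijection $F$ set up in the preceding discussion, while for part (ii) it observes, exactly as you do, that measure equivalence forces $\nu_i(Y_i\setminus E_i)=0$, whence $F_r=F$ and one may apply $F^{-1}$. Your ``Support Transfer Lemma'' is precisely the content that the reference \cite{elee} supplies, and the descriptive-set-theoretic step in part~(a)---replacing analytic images by Borel envelopes differing by $\nu_i$-null sets---as well as the verification that $M_{\min}((\phi_1\times\phi_2)^{-1}(\kappa'))$ is an $\cl A_2$-$\cl A_1$-bimodule (not merely a masa bimodule) are indeed the two places where care is required; you have identified them correctly.
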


\proof Part (i) follows from the above discussion, using similar arguments as in 
\cite[Theorem 2.4]{elee}.

For part (ii), note that if the measures are equivalent, then the maps $\tilde\phi_i$ are 
injective, and thus $\nu_i(Y_i\setminus E_i)=0, \;\;\; i=1,2.$ We conclude that 
$$F(M_{\max}(\kappa))=F_r(M_{\max}(\kappa))=M_{\max}((\phi_1 \times \phi _2)^{-1}(\kappa)),$$
and so 
$$M_{\max}(\kappa)=F^{-1}(M_{\max}((\phi_1 \times \phi _2)^{-1}(\kappa))).$$ 
In other words
$$
M_{\max}(\kappa)=\overline{[\cl M_2^*M_{\max}((\phi_1 \times \phi _2)^{-1}(\kappa))\cl M_1]}^{w^*}.
$$
The statement about $M_{\min}(\kappa)$ follows similarly. 
\qed\medskip

For every  $w^*$-closed subspace $V$  of $\cl B(L^2(E_1), L^2(E_2))$ we write 
$$F_n(V\cap \cl K)=\overline{[\cl N_2(V\cap \cl K)\cl N_1^*]}^{\|\cdot\|},$$
where $\cl N_i,  i=1,2$ are as in \ref{defN}.

\begin{theorem}\label{48}
If $\kappa\subseteq Y_1\times Y_2$ is $\omega$-closed, then 
\begin{align*}
F_n(M_{\max}(\kappa)\cap \cl K) &= F_r(M_{\max}(\kappa))\cap \cl K \\
\text{and }\; F_n(M_{\min}(\kappa)\cap \cl K) &= F_r(M_{\min}(\kappa))\cap \cl K.
\end{align*}
\end{theorem}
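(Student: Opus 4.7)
The plan is to reduce Theorem \ref{48} to Lemma \ref{47} applied to the compressed bimodule $\cl U := R_2\cl M R_1$, viewed as a subspace of $\cl B(L^2(E_1),L^2(E_2))$, where $\cl M$ stands for either $M_{\max}(\kappa)$ or $M_{\min}(\kappa)$.

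First I would unwind the definition of $F_n$ using the factorization $\cl N_i=\cl M_i R_i$ (and hence $\cl N_i^*=R_i\cl M_i^*$). This gives, for any w*-closed masa bimodule $\cl M$,
$$\cl N_2(\cl M\cap\cl K)\cl N_1^* \;=\; \cl M_2\bigl(R_2(\cl M\cap\cl K)R_1\bigr)\cl M_1^*.$$
The delicate point is to identify $R_2(\cl M\cap\cl K)R_1$ with $(R_2\cl M R_1)\cap\cl K$, where the intersection is taken in $\cl B(L^2(E_1),L^2(E_2))$ after the natural identification of operators between the smaller spaces with their extensions by zero. The inclusion $\subseteq$ is automatic, since compactness is preserved under compression and $R_2 T R_1\in\cl M$ for $T\in\cl M$ by the masa bimodule property (the $R_i$ are multiplication projections). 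For the reverse inclusion, if $S\in(R_2\cl M R_1)\cap\cl K$, write $S=R_2 T R_1$ with $T\in\cl M$; viewed in $\cl B(L^2(Y_1),L^2(Y_2))$ via extension by zero, this same operator equals $R_2 S R_1$, which lies in $\cl M$ by the bimodule property and is compact, so it belongs to $\cl M\cap\cl K$.

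Next I would observe that $\cl U=R_2\cl M R_1$ is a w*-closed subspace of $\cl B(L^2(E_1),L^2(E_2))$; this is precisely the identification used in the proof of part (i) of Theorem \ref{equiv}, where $\cl U$ coincides with $M_{\max}(\kappa\cap(E_1\times E_2))$ or $M_{\min}(\kappa\cap(E_1\times E_2))$ respectively. The essentiality of $\cl M_1, \cl M_2$ needed to invoke Lemma \ref{47} is available from the formulas $\overline{[\cl M_i^*\cl A_i\cl M_i]}^{w^*}=D(E_i)$ and $\overline{[\cl M_i D(E_i)\cl M_i^*]}^{w^*}=\cl A_i$ recorded just before the statement of Theorem \ref{equiv}. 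Lemma \ref{47} then yields
$$\overline{[\cl M_2(\cl U\cap\cl K)\cl M_1^*]}^{\|\cdot\|} \;=\; \overline{[\cl M_2\cl U\cl M_1^*]}^{w^*}\cap\cl K \;=\; F(\cl U)\cap\cl K \;=\; F_r(\cl M)\cap\cl K,$$
the last equality being the definition of $F_r$. Combined with the first paragraph this reads $F_n(\cl M\cap\cl K)=F_r(\cl M)\cap\cl K$, for both $\cl M=M_{\max}(\kappa)$ and $\cl M=M_{\min}(\kappa)$.

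The main obstacle is the bookkeeping in the first paragraph, where one must be careful not to conflate the compact operators in $\cl B(L^2(Y_1),L^2(Y_2))$ with those in $\cl B(L^2(E_1),L^2(E_2))$, since the spaces $L^2(E_i)$ enter $F_r$ through compression while the spaces $L^2(Y_i)$ enter $F_n$ directly; the two viewpoints are reconciled only through the bimodule property, which forces the extension by zero of a compressed operator to still lie in $\cl M$. Once this identification is secured, the rest is a formal application of Lemma \ref{47}, and the argument for $M_{\min}$ is literally the same as for $M_{\max}$.
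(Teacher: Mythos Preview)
Your proposal is correct and follows essentially the same route as the paper's proof: both use the factorization $\cl N_i=\cl M_i R_i$ to rewrite $F_n$ in terms of the $\cl M_i$, identify the compressed bimodule $R_2\cl M R_1$ with $M_{\max}(\kappa\cap(E_1\times E_2))$ (resp.\ $M_{\min}$), and then apply Lemma~\ref{47}. Your treatment is in fact more explicit than the paper's about the identification $R_2(\cl M\cap\cl K)R_1=(R_2\cl M R_1)\cap\cl K$, which the paper passes over in a single line.
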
 

\begin{proof} We have 
$$
F_n(M_{\max}(\kappa)\cap \cl K) 
= \overline{[\cl N_2(M_{\max}(\kappa)\cap \cl K )\cl N_1^*]}^{\|\cdot\|} =$$
$$
\overline{[\cl M_2(\cl M_{\max}(\kappa\cap (E_1\times E_2))\cap \cl K )\cl M_1^*]}^{\|\cdot\|}
= F_0(M_{\max}(\kappa\cap (E_1 \times E_2 ))) .$$
By Lemma \ref{47}, the last space  is equal to 
$$F(M_{\max}(\kappa\cap (E_1 \times E_2 ))) 
\cap \cl K=F_r(M_{\max}(\kappa))\cap \cl K.$$
The other equality follows similarly.
\end{proof}

In Theorem \ref{49}  below we improve the inverse image theorem for sets of compact operator 
synthesis obtained by Shulman, Todorov and Turowska in \cite[theorem 4.7]{stwt2}.
Recall that   an  $\omega$-closed set  $\kappa\subseteq Y_1\times Y_2,$ is said to be 
a set of compact operator synthesis
if it satisfies $M_{\max}(\kappa)\cap \cl K=M_{\min}(\kappa)\cap \cl K$.
The sets of compact operator synthesis, as well as their classical counterparts, 
the sets of reduced synthesis   were introduced and studied in \cite{stwt2}. 

In \cite[Theorem 4.7]{stwt2}, the authors proved that,  
 if $\kappa$ is an $\omega$-closed set  of compact operator synthesis,
then $(\phi_1\times \phi_2)^{-1}(\kappa)$ 
is a set of compact operator synthesis under the following assumptions:
The measures $(\phi _i)_*(\mu_i)$ have a Radon-Nikodym derivative with 
respect to $\nu_i$ which are finite a.e. for $i=1,2$ and  the maps 
$\phi_1$ and $\phi_2$ are of a particular form:  the spaces  $X_1$ and $X_2$ 
admit  decompositions $X_1=\tilde X_0\cup\dots\cup \tilde X_m$ and 
$X_2=\tilde Z_0\cup\dots\cup\tilde  Z_l$ such that $\phi_1$ 
is 1-1 when restricted to $\tilde X_0$ and is constant a.e. on each $\tilde X_i, i>0$, 
and similarly for $\phi_2$. 

We arrive at the same conclusion under the assumption that   $\phi _i: X_i\rightarrow Y_i$ are
measurable maps such that $(\phi _i)_*(\mu _i)\ll\nu_i$  for $i=1, 2$.

\begin{theorem}\label{49}
If $\kappa\subseteq Y_1\times Y_2$ is a set of compact operator synthesis then 
$(\phi_1 \times \phi_2)^{-1}(\kappa) $ is also a set of compact operator synthesis. 
\end{theorem}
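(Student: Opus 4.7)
The plan is to assemble the result as a short chain of identifications using the two preceding theorems; essentially no new ideas are needed, only careful bookkeeping of which closures are norm-closures and which are weak-$*$.

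Set $\kappa' := (\phi_1 \times \phi_2)^{-1}(\kappa)$. The goal is to prove
\[
M_{\max}(\kappa') \cap \mathcal{K} \;=\; M_{\min}(\kappa') \cap \mathcal{K},
\]
starting from the assumption that $M_{\max}(\kappa) \cap \mathcal{K} = M_{\min}(\kappa) \cap \mathcal{K}$. First, I would apply Theorem~\ref{equiv}(i) on both sides to rewrite
\[
M_{\max}(\kappa')\cap \cl K = F_r(M_{\max}(\kappa))\cap \cl K, \qquad M_{\min}(\kappa')\cap \cl K = F_r(M_{\min}(\kappa))\cap \cl K.
\]
This removes the inverse image $\kappa'$ and replaces it with the TRO-equivalent bimodules over $Y_1\times Y_2$.

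Next, I would invoke Theorem~\ref{48} to exchange the operation ``intersect with the compacts after taking $F_r$'' for the operation ``take $F_n$ of the compact part'':
\[
F_r(M_{\max}(\kappa))\cap \cl K = F_n(M_{\max}(\kappa)\cap \cl K), \qquad F_r(M_{\min}(\kappa))\cap \cl K = F_n(M_{\min}(\kappa)\cap \cl K).
\]
This is the crucial step, because $F_n$ depends only on its argument through the norm-closure of an ideal-like product, so it is sensitive only to the compact part.

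At this point the hypothesis on $\kappa$ enters trivially: the two compact spaces $M_{\max}(\kappa)\cap\cl K$ and $M_{\min}(\kappa)\cap\cl K$ coincide, so applying $F_n$ to both yields the same subspace, and combining with the previous two displays gives $M_{\max}(\kappa')\cap \cl K = M_{\min}(\kappa')\cap\cl K$.

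Since all the real work has been absorbed into Theorems~\ref{equiv} and~\ref{48}, there is no major obstacle; the only point requiring a moment of care is making sure the theorems are applied in the correct direction (reading $F_r$ as $M_{\max}$ or $M_{\min}$ of the pulled-back set, not the other way around) and that the intersections with $\cl K$ are taken at the right stage, since it is precisely the passage to compacts that permits the norm-closed description and thereby the preservation statement.
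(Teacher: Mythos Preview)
Your proposal is correct and follows essentially the same argument as the paper: both chain together Theorem~\ref{equiv}(i) and Theorem~\ref{48} to pass from $M_{\max/\min}(\kappa')\cap\cl K$ to $F_n(M_{\max/\min}(\kappa)\cap\cl K)$ and then invoke the hypothesis. The only difference is the direction of exposition---the paper starts from the hypothesis and works forward, you start from the goal and work back---but the logical content is identical.
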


\begin{proof} If $\kappa$ is a set of compact operator synthesis then by  Theorem \ref{48} we have 
\begin{align*}
F_n(M_{\max}(\kappa)\cap \cl K) &=F_n(M_{\min}(\kappa)\cap \cl K)\\ \text{hence }\;\;
F_r(M_{\max}(\kappa))\cap \cl K &=F_r(M_{\min}(\kappa))\cap \cl K\\ \text{and hence }\;\;
M_{\max}((\phi_1 \times \phi_2 )^{-1}(\kappa))\cap \cl K 
&= M_{\min}((\phi_1 \times \phi_2 )^{-1}(\kappa))\cap \cl K .
\end{align*}
\end{proof}

\begin{theorem}\label{new}  Let $\phi _i: X_i\rightarrow Y_i$ be
measurable maps such that for $i=1,2$ the measures  $(\phi _i)_*(\mu _i)$ and $\nu _i$ 
are equivalent. Then $\kappa\subseteq Y_1\times Y_2$ is a set of compact operator synthesis 
if and only if $(\phi_1 \times \phi_2)^{-1}(\kappa) $ is  a set of compact operator synthesis. 
\end{theorem}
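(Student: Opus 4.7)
The forward direction is Theorem \ref{49}, since equivalence of $(\phi_i)_*(\mu_i)$ and $\nu_i$ implies in particular absolute continuity. For the converse, the plan is to invert Theorem \ref{equiv}(ii) at the level of compact operators. Under the equivalence assumption, Theorem \ref{equiv}(ii) gives
\[
M_{\max}(\kappa) = \overline{[\cl M_2^*\, M_{\max}((\phi_1\times\phi_2)^{-1}(\kappa))\, \cl M_1]}^{w^*}
\]
and the analogous identity with $M_{\min}$ in place of $M_{\max}$. I want to intersect both sides with $\cl K$ and move this intersection inside the closure on the right, which will reduce compact synthesis of $\kappa$ to that of $(\phi_1\times\phi_2)^{-1}(\kappa)$.

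To do so I will apply Lemma \ref{47} with the roles of $\cl M_i$ and $\cl M_i^*$ interchanged. Since the essentiality conditions $I_H \in \overline{[\cl M^*\cl M]}^{w^*}$ and $I_K \in \overline{[\cl M\cl M^*]}^{w^*}$ are symmetric under $\cl M \mapsto \cl M^*$, and the TRO property is preserved by the involution, each $\cl M_i^*$ is again an essential TRO (viewed as a subspace of $\cl B(L^2(X_i), L^2(Y_i))$, using that $\nu_i(Y_i\setminus E_i)=0$ by equivalence). Lemma \ref{47}, read with $\cl M_i^*$ in place of $\cl M_i$, then yields, for any $w^*$-closed subspace $\cl U \subseteq \cl B(L^2(X_1), L^2(X_2))$,
\[
\overline{[\cl M_2^* \cl U \cl M_1]}^{w^*} \cap \cl K = \overline{[\cl M_2^*\, (\cl U\cap \cl K)\, \cl M_1]}^{\|\cdot\|}.
\]
Specializing $\cl U$ to $M_{\max}((\phi_1\times\phi_2)^{-1}(\kappa))$ and to $M_{\min}((\phi_1\times\phi_2)^{-1}(\kappa))$, and invoking the hypothesis that these two bimodules agree on compact operators, gives $M_{\max}(\kappa)\cap \cl K = M_{\min}(\kappa)\cap \cl K$, as required.

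The main point to check is that Lemma \ref{47} genuinely transposes to the adjoint setting. Its proof is manifestly symmetric in $\cl M_i$ and $\cl M_i^*$: the essentiality hypotheses are used only to produce bounded nets of the form $\sum z_i z_i^*$ and $\sum w_i w_i^*$ converging SOT to the relevant identity operators (and such nets are available on both sides of the involution by Corollary 8.1.24 of \cite{bm}), while compactness of $x \in F(\cl U)\cap \cl K$ is invoked solely to convert iterated SOT-approximation by bounded nets into a norm limit — an argument insensitive to which side of $\cl U$ the TRO is multiplied on. Once this symmetric version of Lemma \ref{47} is in hand, the argument closes mechanically, so the only conceptual work lies in recognizing this symmetry in the lemma and combining it with Theorem \ref{equiv}(ii).
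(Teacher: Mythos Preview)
Your proof is correct and follows essentially the same approach as the paper: the forward direction is Theorem~\ref{49}, and the converse combines Theorem~\ref{equiv}(ii) with Lemma~\ref{47} applied to the adjoint TRO's $\cl M_i^*$ to obtain $M_{\max}(\kappa)\cap\cl K=\overline{[\cl M_2^*(M_{\max}((\phi_1\times\phi_2)^{-1}(\kappa))\cap\cl K)\cl M_1]}^{\|\cdot\|}$ and the analogous identity for $M_{\min}$. Your additional care in justifying the symmetric application of Lemma~\ref{47} (essentiality and the TRO property being preserved under adjoints, and $\nu_i(Y_i\setminus E_i)=0$ ensuring $L^2(E_i)=L^2(Y_i)$) makes explicit what the paper leaves implicit.
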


\begin{proof} 
If $\kappa$ is a set of compact operator synthesis then, by Theorem \ref{49}, 
$(\phi_1 \times \phi_2)^{-1}(\kappa) $ 
is also a set of compact operator synthesis. 
{Conversely, assume that   $(\phi_1 \times \phi_2)^{-1}(\kappa)$ 
is a set of compact operator synthesis. By Theorem \ref{equiv}, (ii) and  Lemma \ref{47} } 
$$M_{\max}(\kappa)\cap \cl K=
\overline{[\cl M_2^*(\cl M_{\max}((\phi_1 \times \phi_2)^{-1}(\kappa)\cap \cl K )\cl M_1]}^{\|\cdot\|},$$
$$M_{\min}(\kappa)\cap \cl K
=\overline{[\cl M_2^*(\cl M_{\min}((\phi_1 \times \phi_2)^{-1}(\kappa)\cap \cl K )\cl M_1]}^{\|\cdot\|}$$ 
It follows  that $\kappa$ is a set of compact operator synthesis.
\end{proof}

\section{strict group moprhisms and measures}\label{20000}
We will need the following results: 

Let $G$ be a locally compact group, $N$ a closed normal subgroup of $G$; let  
$G/N$ be the quotient group and $\pi$ the quotient map $G\rightarrow G/N$. 
Choose Haar measures $\mu$, $\lambda$ and $\nu$ on $G$,  $N$ and $ G/N$  respectively
such that, writing $\dot{x}=\pi(x)$, we have
$$\int_Gf(x)d\mu(x)=\int_{G/N}\left(\int_{N}f(xh)d\lambda(h)\right)d\nu(\dot{x})$$
 for every continuous, compactly supported function $f$ on $G$ \cite[Ch. VII, Proposition 10]{bou}. 

Then from \cite[Ch. VII, \S 2, p.57 b)]{bou} we have the following 

\begin{lemma}\label{l21}
Let $G$ be a  second countable locally compact group and $N$ 
 a closed normal subgroup of $G$. If  $\mu$, $\lambda$ and $\nu$ are  Haar 
measures on $G$,  $N$ and $ G/N$  chosen as above, then 
 $$\mu(\pi^{-1}(E))=\lambda (N)\nu (E)$$ for every Borel set 
 $E \subseteq G/N.$
\end{lemma}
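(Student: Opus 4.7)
The plan is to obtain the identity by evaluating Weil's quotient integral formula on the characteristic function of $\pi^{-1}(E)$, after extending that formula from $C_c(G)$ to general nonnegative Borel functions.

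First I would extend the identity
$$\int_G f(x)\,d\mu(x)=\int_{G/N}\!\!\left(\int_N f(xh)\,d\lambda(h)\right)d\nu(\dot x)$$
from continuous compactly supported $f$ to arbitrary nonnegative Borel functions on $G$. Since $G$ is second countable, both $N$ and $G/N$ are $\sigma$-compact Polish, so a standard monotone class / monotone convergence argument works: the collection of bounded Borel functions for which the identity holds is closed under monotone bounded pointwise limits and contains $C_c(G)$, hence contains all bounded Borel functions by the functional monotone class theorem; a further application of monotone convergence lifts this to all nonnegative Borel functions (with values in $[0,\infty]$). One just has to check that $x\mapsto\int_N f(xh)\,d\lambda(h)$ is $N$-invariant on $G$ and descends to a Borel function on $G/N$, which follows from Fubini and the fact that $\pi$ is an open continuous surjection admitting a Borel cross-section.

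Now let $E\subseteq G/N$ be Borel and take $f=\chi_{\pi^{-1}(E)}$. For every $x\in G$ and $h\in N$ we have $\pi(xh)=\pi(x)$, so $f(xh)=\chi_E(\dot x)$ independently of $h$. Consequently the inner integral equals $\lambda(N)\chi_E(\dot x)$, with the convention that this is $0$ when $\chi_E(\dot x)=0$ even if $\lambda(N)=\infty$. Plugging into the extended Weil formula yields
$$\mu(\pi^{-1}(E))=\int_G \chi_{\pi^{-1}(E)}\,d\mu=\int_{G/N}\lambda(N)\chi_E(\dot x)\,d\nu(\dot x)=\lambda(N)\nu(E),$$
which is exactly the claimed identity (both sides being simultaneously finite, zero, or infinite).

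The main obstacle is the routine but not entirely trivial extension of the quotient integral formula from $C_c(G)$ to Borel functions that are not compactly supported. If instead one prefers to sidestep the $\lambda(N)=\infty$ case directly, one can first verify the identity for Borel sets $E\subseteq G/N$ of finite $\nu$-measure by approximating $\chi_E$ from above by $C_c(G/N)$ functions and $\chi_{\pi^{-1}(E)}$ by $C_c(G)$ functions, and then extend to arbitrary Borel $E$ via $\sigma$-additivity. Either route is standard; the substantive content of the lemma is really Weil's formula itself, cited from Bourbaki.
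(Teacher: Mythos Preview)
Your proposal is correct and is essentially the argument behind the paper's approach: the paper does not give an independent proof of this lemma but simply cites Bourbaki \cite[Ch.~VII, \S2, p.~57~b)]{bou}, and what you have written is precisely the standard derivation one finds there---extend Weil's quotient integral formula from $C_c(G)$ to nonnegative Borel functions by monotone convergence, then evaluate on $\chi_{\pi^{-1}(E)}$. Your handling of the $0\cdot\infty$ convention also covers the point the paper isolates in Remark~\ref{rem23}.
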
 

\begin{remark} \label{rem23}
In particular $\pi_* (\mu)$ and $\nu$ are equivalent. 
Note that if $\lambda (N)=+\infty$ and $\nu (E)=0$, second countability ensures that we
have $\mu(\pi^{-1}(E))=0$.
\end{remark}	

If $G, H$ are locally compact groups, $\theta: G\rightarrow H$ is a homomorphism  
and $\mu$ is a measure on $G$, we recall that we
 denote by  $\theta _*(\mu )$ the measure on $H$ defined 
by $\theta _*(\mu )(E)= \mu (\theta^{-1}(E ))$ for  every Borel set $E \subseteq H.$ 

Recall that  $\theta $ is said to be a strict morphism if $\theta (G)$, 
with the relative topology induced by $H$, is homeomorphic to $G/\ker\theta .$

\begin{lemma}\label{22} 
Let $G, H$ be locally compact groups and  $\theta : G\rightarrow H$ be a strict morphism. 
Let  $\mu $, $\nu$ and $\lambda $
be Haar measures on $G$,  $\theta (G)$ and 
$\ker\theta $ respectively. Then 
there exists $0<c<+\infty $ such that 
$$ \mu (\theta^{-1}(E )) =c\lambda (\ker\theta )\nu(E )$$  
for every Borel set $E \subseteq \theta (G).$ 
\end{lemma}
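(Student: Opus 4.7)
The plan is to factor $\theta$ as $\tilde\theta\circ\pi$, where $\pi\colon G\to G/N$ is the quotient by $N:=\ker\theta$ and $\tilde\theta\colon G/N\to\theta(G)$ is the induced algebraic isomorphism, and then to combine Lemma~\ref{l21} with the uniqueness of Haar measure up to a positive scalar.

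First I would use strictness: by assumption $\theta(G)$, with the subspace topology inherited from $H$, is homeomorphic to $G/N$ via $\tilde\theta$, and so $\tilde\theta$ is a topological group isomorphism between two locally compact groups. In particular, $\theta^{-1}(E)=\pi^{-1}(\tilde\theta^{-1}(E))$ for every Borel set $E\subseteq\theta(G)$.

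Next I would pick a Haar measure $\tilde\nu_0$ on $G/N$ compatible with the given $\mu$ and $\lambda$ in the sense of the Weil-type integration formula preceding Lemma~\ref{l21}. That lemma then yields
$$\mu(\pi^{-1}(F))=\lambda(N)\,\tilde\nu_0(F)$$
for every Borel $F\subseteq G/N$. Since $\tilde\theta$ is a topological group isomorphism, the pushforward $\tilde\theta_*(\tilde\nu_0)$ is a left-invariant Radon measure on $\theta(G)$, hence a Haar measure, and uniqueness of Haar measure produces a constant $c>0$ with $\tilde\theta_*(\tilde\nu_0)=c\nu$.

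Putting these together,
$$\mu(\theta^{-1}(E))=\mu(\pi^{-1}(\tilde\theta^{-1}(E)))=\lambda(N)\,\tilde\nu_0(\tilde\theta^{-1}(E))=c\,\lambda(N)\,\nu(E)$$
for every Borel $E\subseteq\theta(G)$, as desired. I do not expect a substantive obstacle; the proof is essentially bookkeeping of scalars. The one point needing some care is the degenerate case $\lambda(N)=+\infty$, where the formula is read with the convention $\infty\cdot 0=0$, second countability guaranteeing the consistency $\mu(\theta^{-1}(E))=0$ whenever $\nu(E)=0$ exactly as in Remark~\ref{rem23}. A second minor check is that the pushforward of $\tilde\nu_0$ under $\tilde\theta$ is Radon for the topology of $\theta(G)$ in which $\nu$ is Haar; this is precisely where the strict morphism hypothesis enters, since otherwise the subspace topology on $\theta(G)$ could strictly refine the quotient topology and Haar uniqueness would not be directly applicable.
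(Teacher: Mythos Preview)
Your proof is correct and follows essentially the same route as the paper: factor through the quotient, invoke Lemma~\ref{l21}, and absorb the normalization discrepancy via uniqueness of Haar measure. The only cosmetic difference is that the paper applies uniqueness on $\ker\theta$ (adjusting $\lambda$ to a compatible $\lambda_0$) whereas you apply it on $\theta(G)$ (adjusting $\tilde\nu_0$ to $\nu$); either choice produces the constant $c$.
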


\begin{proof} 
Since $\theta$ is strict, $G/\ker \theta$ is topologically isomorphic to $\theta(G)$.
The assertion follows from Lemma \ref{l21}, by uniqueness of Haar measure on $\ker\theta$. 
\end{proof}
 
\begin{lemma}\label{24} Let $G, H $ be locally compact second countable groups and 
$\theta: G\rightarrow H$ a continuous homomorphism.
Assume that $\theta (G)$ is an open subgroup of $H.$ Then 

(i) $\theta $ is a strict morphism

(ii) $\theta $ is an open map.
\end{lemma}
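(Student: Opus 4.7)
The plan is to deduce both parts from the classical open mapping theorem for locally compact $\sigma$-compact groups (or equivalently for Polish groups), applied to the surjective restriction $\theta: G\to\theta(G)$.

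First I would observe that since $\theta(G)$ is an open subgroup of $H$, it is automatically closed (a well-known fact: the complement of an open subgroup is the union of its nontrivial cosets, each of which is open). In particular $\theta(G)$, endowed with the relative topology, is a locally compact second countable group in its own right. Being second countable locally compact, both $G$ and $\theta(G)$ are $\sigma$-compact (and also Polish).

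Next I would apply the open mapping theorem: a continuous surjective group homomorphism between $\sigma$-compact locally compact groups is open. Regarding $\theta$ as a continuous surjective homomorphism $G\to\theta(G)$, this gives that $\theta$ is an open map onto $\theta(G)$. Since $\theta(G)$ is open in $H$, any subset that is open in $\theta(G)$ is also open in $H$; hence $\theta: G\to H$ is open, proving (ii).

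For (i), let $\pi: G\to G/\ker\theta$ be the quotient map and let $\bar\theta: G/\ker\theta\to\theta(G)$ be the induced continuous bijective homomorphism, so that $\theta=\bar\theta\circ\pi$. Since $\pi$ is a continuous open surjection and $\theta$ is open by the previous step, $\bar\theta$ must be open as well: for any open $V\subseteq G/\ker\theta$, the set $\pi^{-1}(V)$ is open in $G$, hence $\bar\theta(V)=\theta(\pi^{-1}(V))$ is open in $\theta(G)$. Being a continuous, open bijective homomorphism, $\bar\theta$ is a homeomorphism, which is precisely the strictness of $\theta$.

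The only nontrivial ingredient is the invocation of the open mapping theorem for locally compact $\sigma$-compact groups; once this is in hand, everything else is formal. I would expect the main care to lie in pinning down the correct form of this theorem (for Polish groups it is Banach--Pettis; for $\sigma$-compact locally compact groups it is standard and appears, for example, in Hewitt--Ross).
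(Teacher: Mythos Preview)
Your proof is correct and rests on the same underlying fact as the paper's, namely the open mapping theorem for continuous homomorphisms between locally compact second countable (hence $\sigma$-compact) groups. The paper, however, runs the argument in the opposite order: it first observes that an open subgroup is closed and then invokes \cite[Proposition 6]{dr} (which is essentially a packaged form of the open mapping theorem) to conclude directly that $\theta$ is a strict morphism, establishing (i); it then derives (ii) from (i) by noting that the quotient map $G\to G/\ker\theta$ is open and the induced bijection $G/\ker\theta\to\theta(G)$ is a homeomorphism by strictness. Your route---prove openness first via the open mapping theorem, then deduce strictness by showing the induced bijection is open---is equally valid and perhaps slightly more self-contained, since you spell out the open mapping theorem explicitly rather than citing a result that encapsulates it. Neither approach has a real advantage over the other; they are two sides of the same coin.
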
 

\begin{proof}An open subgroup of a topological group is also closed. Hence,
it follows from \cite[Proposition 6]{dr} that $\theta$ is a strict morphism.
The quotient map $G\rightarrow G/\ker {\theta}$
is always open and since $\theta$ is a strict morphism, 
 the induced map $G/\ker {\theta}\to H$ is a homeomorphism. So, $\theta$ is an open map.
\end{proof}

\begin{theorem}\label{25} Let $G, H$ be locally compact second countable groups 
with Haar measures $\mu $ and $\nu $ respectively, and 
$\theta : G\rightarrow H$ be a continuous homomorphism.  The following are equivalent:

(i) $\theta _*(\mu )\ll\nu $

(ii) $\theta(G)$ is an open subgroup of $H$.
\end{theorem}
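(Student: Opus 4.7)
The plan is to prove the two implications separately. For (ii) $\Rightarrow$ (i) the structural lemmas already proved do most of the work: assuming $\theta(G)$ is open, Lemma \ref{24} yields that $\theta$ is a strict morphism, and Lemma \ref{22} (together with Remark \ref{rem23} to accommodate the case $\lambda(\ker\theta)=+\infty$) identifies $\theta_*(\mu)$ on Borel subsets of $\theta(G)$ with a positive, possibly infinite, scalar multiple of $\nu$. For a Borel $E\subseteq H$ with $\nu(E)=0$, I would use the decomposition $E=(E\cap\theta(G))\sqcup(E\setminus\theta(G))$: the first summand is Borel (because $\theta(G)$ is open) and is handled by Lemma \ref{22} applied to $E\cap\theta(G)\subseteq\theta(G)$, while $\theta^{-1}(E\setminus\theta(G))=\emptyset$. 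Both summands therefore contribute $0$ to $\theta_*(\mu)(E)$.

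The substantive direction is (i) $\Rightarrow$ (ii), which I would prove in three steps. First, $\theta(G)$ is a Borel subset of $H$: since $G$ is second countable and locally compact it is $\sigma$-compact, so $\theta(G)$ is a countable union of compact sets, hence Borel. Second, $\nu(\theta(G))>0$: if not, absolute continuity would force $\theta_*(\mu)(\theta(G))=0$, contradicting the identity $\theta_*(\mu)(\theta(G))=\mu(\theta^{-1}(\theta(G)))=\mu(G)>0$ (valid whether $\mu(G)$ is finite or infinite). Third, by inner regularity of $\nu$ on the $\sigma$-compact group $H$, pick a compact $C\subseteq\theta(G)$ with $0<\nu(C)<+\infty$; the classical Steinhaus--Weil theorem then produces a neighborhood $V$ of the identity with $V\subseteq CC^{-1}$, and since $\theta(G)$ is a subgroup we have $CC^{-1}\subseteq\theta(G)$. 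So $\theta(G)$ contains a neighborhood of $e$, hence has non-empty interior, and is therefore open.

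The principal obstacle is the last step, which relies on the Steinhaus--Weil theorem: the classical non-elementary fact that in a locally compact group every Borel set of positive Haar measure has a difference set with non-empty interior. The remaining bookkeeping---Borel measurability of $\theta(G)$ from $\sigma$-compactness of $G$, and the treatment of the possibly non-$\sigma$-finite case $\lambda(\ker\theta)=+\infty$ via Remark \ref{rem23}---is routine but indispensable for the argument to cover all continuous homomorphisms between second countable locally compact groups.
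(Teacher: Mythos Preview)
Your proposal is correct and follows essentially the same approach as the paper: for (i)$\Rightarrow$(ii) the paper also argues that $\nu(\theta(G))>0$ and then invokes Weil's theorem (your Steinhaus--Weil) to conclude $\theta(G)$ is open, and for (ii)$\Rightarrow$(i) the paper likewise uses Lemma~\ref{24} and Lemma~\ref{22} to compare $\theta_*(\mu)$ with $\nu$ on $\theta(G)$. You are in fact more careful than the paper on two points it leaves implicit: the Borel measurability of $\theta(G)$ via $\sigma$-compactness, and the handling of the $\lambda(\ker\theta)=+\infty$ case through Remark~\ref{rem23}.
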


\begin{proof} (i) $\Rightarrow $ (ii)
If $\nu (\theta (G))=0$ then $\theta _*(\mu )(\theta (G))=\mu (G)=0$ which is absurd. Thus 
$\nu (\theta (G))>0,$ which by Weil's theorem \cite[p. 50]{we} implies that $\theta(G)$ contains an 
open subset, hence  is an open subgroup of $H$. 

(ii) $\Rightarrow $ (i)
 Let $\nu_0$ be the restriction of $\nu$ to   $\theta(G)$. Since $\theta(G)$ 
 is an open subgroup of $H$,  $\nu_0$ is  a Haar measure on $\theta(G)$.
By Lemma \ref{24} $\theta $ is a strict morphism. By Lemma \ref{22}, for some $c>0$ we have 
\begin{equation}\label{*} 
\theta_*(\mu )(E)=\theta _*(\mu )(E\cap\theta (G))= c\lambda (\ker\theta) \nu_0 (E\cap\theta (G)) 
\end{equation}
for every Borel set $E \subseteq H$. Therefore 
$$
\theta_*(\mu )(E )\leq c\lambda (\ker\theta) \nu(E) 
$$
and hence $\theta _*(\mu )\ll\nu .$
\end{proof}

\begin{corollary}\label{26} Let $G, H$ be locally compact second countable groups 
with Haar measures $\mu $ and $\nu $ respectively, and let 
$\theta : G\rightarrow H$ be a continuous homomorphism
such that  $\theta _*(\mu )\ll\nu .$ Then 

(i) If $\ker\theta $ is compact then $\theta _*(\mu )$ is a Haar measure for $\theta (G).$

(ii) If $\ker\theta $ is not compact then  $\theta _*(\mu )(E )\in \{0,+\infty \}$ 
for every Borel set $E \subseteq \theta (G).$
\end{corollary}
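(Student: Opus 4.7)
The plan is to read off both parts directly from the structural formula established in the proof of Theorem \ref{25} (namely equation (\ref{*})), using Theorem \ref{25} itself to activate that formula. Since $\theta_*(\mu)\ll\nu$ by hypothesis, Theorem \ref{25} gives that $\theta(G)$ is an open subgroup of $H$, so by Lemma \ref{24} the map $\theta$ is a strict morphism. Let $\nu_0$ be the restriction of $\nu$ to $\theta(G)$; since $\theta(G)$ is open, $\nu_0$ is a Haar measure on $\theta(G)$. Equation (\ref{*}) from the proof of Theorem \ref{25} (which is just Lemma \ref{22} applied to the corestriction of $\theta$ onto $\theta(G)$) then yields a constant $c>0$ with
$$\theta_*(\mu)(E)=c\,\lambda(\ker\theta)\,\nu_0(E\cap\theta(G))\qquad \text{for every Borel }E\subseteq H.$$

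For (i), if $\ker\theta$ is compact then $\lambda(\ker\theta)<+\infty$, so the restriction of $\theta_*(\mu)$ to $\theta(G)$ equals the positive finite scalar $c\lambda(\ker\theta)$ times the Haar measure $\nu_0$, hence is itself a (left) Haar measure on $\theta(G)$.

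For (ii), suppose $\ker\theta$ is not compact, so $\lambda(\ker\theta)=+\infty$, and let $E\subseteq\theta(G)$ be Borel. If $\nu_0(E)>0$, the displayed formula forces $\theta_*(\mu)(E)=+\infty$. If $\nu_0(E)=0$, then since $\nu_0$ is the restriction of $\nu$ we have $\nu(E)=0$, and the hypothesis $\theta_*(\mu)\ll\nu$ gives $\theta_*(\mu)(E)=0$. Thus $\theta_*(\mu)(E)\in\{0,+\infty\}$. The only mildly subtle point is the $0\cdot\infty$ ambiguity in the displayed formula when $\nu_0(E)=0$; rather than resolve it arithmetically, we sidestep it by invoking the absolute continuity assumption directly, in the same spirit as Remark \ref{rem23} (whose validity relies on second countability). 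No other obstacle arises.
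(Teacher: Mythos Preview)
Your proof is correct and follows essentially the same route as the paper: both derive everything from equation~(\ref{*}) (i.e., Lemma~\ref{22}) once Theorem~\ref{25} ensures $\theta(G)$ is open. Your explicit handling of the $0\cdot\infty$ case via the absolute-continuity hypothesis is a minor elaboration of what the paper leaves implicit (cf.\ Remark~\ref{rem23}), but the argument is otherwise the same.
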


\begin{proof} From the proof of Theorem \ref{25} it follows that there exists $0<c<+\infty$   such that 
$$
\mu(G)=\theta_*(\mu )(\theta(G))\le c\lambda (\ker\theta) \nu(\theta(G)) 
$$
and hence $\gl(\ker\theta)>0$. Therefore, using (\ref{*}), 
$$\theta_*(\mu)(E)=c\lambda (\ker\theta) \nu (E )$$ for  every Borel set $E \subseteq \theta (G).$
 This fact implies the conclusion.
\end{proof} 

\section{Synthesis and Compact operator synthesis }\label{sec4}

Recall that the Fourier-Stieltjes algebra of $G$ is the set of all coefficient functions 
$s\to\sca{\pi(s)\xi,\eta}, \ (\xi,\eta\in H_\pi)$ defined by unitary representations 
$(\pi, H_\pi)$ of $G$, while the Fourier algebra $A(G)$ of $G$ consists of the coefficients 
of the left regular representation $s\to\gl_s$ on $L^2(G)$, given by $\gl_s\xi(t):=\xi(s^{-1}t)$.

We study preservation and transference 
properties  of  continuous group homomorphisms between locally compact groups.
These homomorphisms preserve important objects of Harmonic Analysis. For example, if 
$\theta : G\rightarrow H$ is a continuous homomorphism, then the map  
$u\mapsto u\circ \theta $  is a contraction from  the Fourier algebra $A(H)$ of 
$H$ into the Fourier-Stieltjes algebra $B(G)$ of $G$ \cite[Th\'{e}or\`{e}me 2.20(1)]{eym}. 

If $J\subseteq A(G)$ is an ideal, we denote by  $Z(J)$  the set of all points of $G$ 
where all $u\in J$ vanish. If $E$ is   a closed subset of $G$,  we set 
\begin{align*}
I_G(E)=I(E) & =\{u\in A(G) : u(s)=0,\,  s\in E\}\, , \\
J_G(E) = J(E) &=\overline{\{u \in A(G) : u \text{ has compact support disjoint from } E\}}\, .
\end{align*}	
Then 
$$Z(J(E)) = Z( I(E)) = E$$
and, if $J\subseteq A(G)$ is a closed ideal with $Z(J) = E$, then $J(E)\subseteq J\subseteq I(E)$.

A closed subset $E\subseteq G$ is called a {\it set of spectral synthesis} if $I(E) = J(E)$.
Shulman, Todorov and Turowska introduce and study in \cite{stwt2} the notions 
of reduced spectral synthesis and reduced local spectral synthesis for closed subsets 
of a locally compact group $G$. A closed set $E$ is a set of {\it reduced spectral synthesis} if
$C_r^*(G)\cap I(E)^\perp = C_r^*(G)\cap J(E)^\perp$. 
It is a set of {\it reduced local spectral synthesis} if 
$C_r^*(G)\cap I_c(E)^\perp = C_r^*(G)\cap J(E)^\perp$, where 
$I_c(E)$ denotes the functions in  $I(E)$ of compact support.  
Here $C_r^*(G)$ is the reduced $C^*$-algebra of $G$, that is, the $C^*$-subalgebra of 
$\cl B(L^2(G))$ generated by all operators $\gl(f), f\in L^1(G)$, where   
$\gl(f)(h)= f*h,  \ h\in L^2(G)$. For motivation, examples and relevant discussion 
about these concepts,  see \cite[Section 3]{stwt2}.

In what follows, for a subset $E\subseteq G$ we write
$$ E^*=\{(s,t)\in G\times G: \;\;ts^{-1}\in E\}\, .$$

\begin{theorem}\label{411} 
Let $G, H$ be locally compact second countable groups 
with Haar measures $\mu $ and $\nu $ 
respectively and let $\theta : G\rightarrow H$ be a continuous homomorphism such that 
$\theta (G)$ is an open set in $H.$ 
If $E\subseteq H$ is a set of reduced local spectral synthesis then $\theta ^{-1}(E)$ 
is also a set of  reduced local spectral synthesis. 
\end{theorem}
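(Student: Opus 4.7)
The plan is to reduce the statement about the Fourier algebra of $G$ to an operator synthesis statement via the standard ``$E \mapsto E^*$'' correspondence, and then to apply Theorem \ref{49} to the map $\theta\times\theta$.

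First I would invoke Theorem \ref{25}: since $\theta(G)$ is an open subset of $H$, it is automatically an open subgroup, so $\theta_*(\mu)\ll\nu$. This puts us in the hypothesis of Theorem \ref{49} with $X_1=X_2=G$, $Y_1=Y_2=H$, and $\phi_1=\phi_2=\theta$.

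Next I would use the translation result of Shulman--Todorov--Turowska from \cite{stwt2} which (in the spirit of Arveson and Froelich) identifies sets of reduced local spectral synthesis in $G$ with $\omega$-closed subsets of $G\times G$ that are sets of compact operator synthesis. Concretely, a closed set $E\subseteq H$ is of reduced local spectral synthesis if and only if the $\omega$-closed set
\[
E^*=\{(s,t)\in H\times H:\, ts^{-1}\in E\}
\]
is a set of compact operator synthesis in the sense of Section~\ref{ontocomp}. The same translation applied to $G$ will give the desired property for $\theta^{-1}(E)$ once we know that $(\theta^{-1}(E))^*$ is a set of compact operator synthesis.

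The key algebraic observation is that $\theta$ is a homomorphism, so
\[
(\theta\times\theta)^{-1}(E^*)=\{(s,t)\in G\times G:\, \theta(t)\theta(s)^{-1}\in E\}
=\{(s,t)\in G\times G:\, ts^{-1}\in\theta^{-1}(E)\}=(\theta^{-1}(E))^*.
\]
Thus, starting from the hypothesis that $E$ is a set of reduced local spectral synthesis, the translation result gives that $E^*$ is a set of compact operator synthesis; Theorem \ref{49}, whose hypothesis $\theta_*(\mu)\ll\nu$ was secured in the first step, then yields that $(\theta\times\theta)^{-1}(E^*)=(\theta^{-1}(E))^*$ is a set of compact operator synthesis; finally the translation result in the reverse direction returns the conclusion that $\theta^{-1}(E)$ is a set of reduced local spectral synthesis.

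The only point that is not completely mechanical is the precise form of the correspondence between reduced local spectral synthesis in a group and compact operator synthesis of the associated set $E^*$; so the main thing to pin down (and cite carefully from \cite{stwt2}) is that this correspondence holds in both directions for locally compact second countable groups and not merely in the classical setting. Once that is in hand, the argument is just Theorem \ref{25} plus Theorem \ref{49} plus the identity $(\theta\times\theta)^{-1}(E^*)=(\theta^{-1}(E))^*$.
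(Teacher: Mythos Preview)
Your proposal is correct and follows essentially the same route as the paper's proof: translate via \cite[Theorem~5.1]{stwt2} to compact operator synthesis of $E^*$, apply Theorem~\ref{49}, use the identity $(\theta\times\theta)^{-1}(E^*)=(\theta^{-1}(E))^*$, and translate back. Your explicit invocation of Theorem~\ref{25} to secure $\theta_*(\mu)\ll\nu$ before applying Theorem~\ref{49} is a detail the paper leaves implicit, but otherwise the arguments coincide.
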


\begin{proof} If $E\subseteq H$ is a set of reduced local spectral synthesis 
then \cite[Theorem 5.1]{stwt2} implies that $E^*$ 
is a set of compact operator synthesis. By Theorem \ref{49}, $  (\theta \times \theta )^{-1}( E^*)  $ 
is also a set of compact operator synthesis. Since 
$\theta ^{-1}(E)^*= (\theta \times \theta )^{-1}( E^*)$, using \cite[Theorem 5.1]{stwt2} again 
we conclude that $\theta ^{-1}(E)$  is a set of reduced local spectral synthesis.
\end{proof}

Under the assumptions of Theorem \ref{411}, observe that   
the measures $\theta_*(\mu)$ and $\nu$ are equivalent if and only if $\theta$ 
is onto. Indeed, if $\theta$  is onto, by Theorem \ref{25} and Corollary \ref{26} we have that 
$$\mu(\theta^{-1}(A))=c \nu(A)$$
for every Borel set $A\subseteq H,$ where $c$ is a positive constant, perhaps infinity if  
$\ker \theta$ is not compact. 
Thus $\theta_*(\mu)$ and $\nu$ are equivalent. 
Conversely, assume that $\theta_*(\mu)$ and $\nu$ are equivalent.
Then, since $\theta_*(\mu)(H\setminus \theta(G))=0$ we have $\nu(H\setminus \theta(G))=0.$
But every open subgroup is also closed, so $H\setminus \theta(G)$  
is open; since it is $\nu$-null, it must be empty. 
Thus $\theta(G)=H.$
 
Under the above assumptions the map 
$$\tilde \theta : L^\infty(H)\rightarrow L^\infty(G), \; f\rightarrow f\circ \theta$$  
is an injective $*$-homomorphism. Indeed assume that $\tilde \theta$ is not injective then  
$\ker \tilde\theta= L^\infty(A)$ for a Borel set $A\subseteq H$ with positive measure $\nu(A).$
Thus, $\mu(\theta^{-1}(A))$  is positive. This implies that $\tilde \theta(\chi_A)\neq 0$ 
which is a contradiction.

\smallskip 
In the following result, the `only if' direction is due to Shulman and Turowska \cite[Theorem 4.7]{st}.  
Recall  that a subset $\kappa\subseteq H\times H$ is $\omega$-closed if its complement is marginally
equivalent to a countable union of Borel rectangles. 
 
\begin{theorem}\label{oc4} 
Let $G, H$ be locally compact second countable groups 
with Haar measures $\mu$ and $\nu$ respectively, 
$\theta$ be  a continuous onto homomorphism from $G$ to $H,$ and 
$\kappa\subseteq H\times H$ be an $\omega$-closed set. 
Then the set $\kappa$ is a set of operator synthesis if and only if 
$(\theta\times\theta)^{-1}(\kappa)$  is a set of operator synthesis. 
\end{theorem}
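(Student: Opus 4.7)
The plan is to apply Theorem \ref{equiv} directly, with $X_1=X_2=G$, $Y_1=Y_2=H$, $\mu_1=\mu_2=\mu$, $\nu_1=\nu_2=\nu$ and $\phi_1=\phi_2=\theta$. The hypotheses of Theorem \ref{equiv} are met because $\theta$ is continuous and onto: by Theorem \ref{25} (with $\theta(G)=H$ open) we have $\theta_*(\mu)\ll\nu$, and the discussion following Theorem \ref{411} shows that when $\theta$ is onto, $\theta_*(\mu)$ and $\nu$ are in fact equivalent (so $E_1=E_2=H$ up to null sets).

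For the forward direction, suppose $\kappa$ is a set of operator synthesis, i.e.\ $M_{\max}(\kappa)=M_{\min}(\kappa)$. Then by Theorem \ref{equiv}(i),
\begin{align*}
M_{\max}((\theta\times\theta)^{-1}(\kappa)) &= F_r(M_{\max}(\kappa)) \\
&= F_r(M_{\min}(\kappa)) = M_{\min}((\theta\times\theta)^{-1}(\kappa)),
\end{align*}
so $(\theta\times\theta)^{-1}(\kappa)$ is a set of operator synthesis.

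For the converse, assume $M_{\max}((\theta\times\theta)^{-1}(\kappa))=M_{\min}((\theta\times\theta)^{-1}(\kappa))$. Since $\theta_*(\mu)$ and $\nu$ are equivalent we may invoke Theorem \ref{equiv}(ii) to express both $M_{\max}(\kappa)$ and $M_{\min}(\kappa)$ as $\overline{[\cl M_2^*(\,\cdot\,)\cl M_1]}^{w^*}$ applied to the corresponding bimodule over $(\theta\times\theta)^{-1}(\kappa)$; those two inner bimodules coincide by hypothesis, so $M_{\max}(\kappa)=M_{\min}(\kappa)$ and $\kappa$ is of operator synthesis.

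There is essentially no analytic obstacle here beyond setting up the correspondence correctly; the machinery of Theorem \ref{equiv} has already absorbed all the work. The one point that must be handled with care is the equivalence of $\theta_*(\mu)$ and $\nu$ in the converse direction, since Theorem \ref{equiv}(ii)---unlike (i)---is not valid under the mere hypothesis $\theta_*(\mu)\ll\nu$; this is precisely where surjectivity of $\theta$ (rather than just openness of $\theta(G)$) is used.
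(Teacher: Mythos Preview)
Your proof is correct and follows essentially the same route as the paper: both arguments reduce the claim to Theorem~\ref{equiv}, using part (i) for the forward implication and part (ii) (which needs the equivalence of $\theta_*(\mu)$ and $\nu$, hence surjectivity of $\theta$) for the converse. The paper presents all four resulting identities at once and concludes, while you separate the two directions and make explicit which half of Theorem~\ref{equiv} drives each---a cosmetic difference only.
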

  
\begin{proof}   
We define the TRO 
$$
\cl M=\{T\in\cl B(L^2(H), L^2(G)): TP(E)
=Q(\theta^{-1}(E))T, \;\;\text{for all }\; \; E\subseteq H\;\; \text{Borel}\},$$
where $P(E)$ is the projection onto $L^{2}(E, \nu)$ and $Q(\theta^{-1}(E))$ 
is the projection onto $L^{2}(\theta^{-1}(E), \mu).$ 
Since the map $\tilde \theta : L^\infty(H)\to L^\infty(G): f\to f\circ \theta$
is an injective $*$-homomorphism, from Theorem \ref{equiv} we have that 
$$
M_{\max}((\theta\times\theta)^{-1}(\kappa))=\overline{[\cl M M_{\max}(\kappa) \cl M^{*}]}^{w^*}, $$
$$
M_{\max}(\kappa)=\overline{[\cl M^{*}( M_{\max}(\theta\times\theta)^{-1}(\kappa)) \cl M]}^{w^*}, $$
$$
M_{\min}((\theta\times\theta)^{-1}(\kappa))=\overline{[\cl M M_{\min}(\kappa) \cl M^{*}]}^{w^*}, $$
$$
M_{\min}(\kappa)=\overline{[\cl M^{*}( M_{\min}(\theta\times\theta)^{-1}(\kappa)) \cl M]}^{w^*}, $$
By the definition of operator synthesis, the result follows.
\end{proof}
 
In \cite[Theorems 4.3 and 4.11]{lt} the authors show that a closed set $E\subseteq G$ is a set of 
local synthesis if and only if $E^{*}\subseteq G\times G$ is a set of operator synthesis. Therefore, 
using the above theorem, we obtain the following result which is proved by Lohou\'{e} in \cite{lo}.  
 
\begin{corollary}\label{oc5} 
Let $G, H$ be locally compact second countable groups, $\theta$ 
be  a continuous onto homomorphism from $G$ to $H,$ and $E\subseteq H$ 
be a closed set. Then $E$ is a set of local synthesis  if and only if $\theta^{-1}(E)$ 
is a set of local synthesis.  
\end{corollary}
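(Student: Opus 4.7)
The plan is to reduce the statement to Theorem \ref{oc4} via the Ludwig--Turowska correspondence between local synthesis and operator synthesis. First I would invoke \cite[Theorems 4.3 and 4.11]{lt}, quoted just above the statement: for any closed subset $F$ of a locally compact second countable group $K$, one has $F$ is a set of local synthesis if and only if $F^{*}\subseteq K\times K$ is a set of operator synthesis. Applied to $E\subseteq H$ and to $\theta^{-1}(E)\subseteq G$, this converts the corollary into the equivalent statement that $E^{*}$ is a set of operator synthesis if and only if $\theta^{-1}(E)^{*}$ is.

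Next I would establish the key identity
$$\theta^{-1}(E)^{*}=(\theta\times\theta)^{-1}(E^{*}).$$
This is a direct computation: $(s,t)\in\theta^{-1}(E)^{*}$ iff $ts^{-1}\in\theta^{-1}(E)$ iff $\theta(t)\theta(s)^{-1}=\theta(ts^{-1})\in E$ iff $(\theta(s),\theta(t))\in E^{*}$. I would then remark that $E^{*}$ is $\omega$-closed: since $E$ is closed and the map $(s,t)\mapsto ts^{-1}$ is continuous, $E^{*}$ is closed in the second countable space $H\times H$, so its complement is a countable union of open basic rectangles, which gives the required $\omega$-closed property.

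Finally I would apply Theorem \ref{oc4} to $\kappa:=E^{*}$, using that $\theta$ is continuous and onto. That theorem yields
$$E^{*}\text{ is a set of operator synthesis}\ \Longleftrightarrow\ (\theta\times\theta)^{-1}(E^{*})\text{ is a set of operator synthesis},$$
and by the identity above the right-hand side equals $\theta^{-1}(E)^{*}$. Chaining the two equivalences provided by the Ludwig--Turowska theorem with this one gives the desired conclusion.

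There is really no serious obstacle: the argument is a clean three-step assembly (Ludwig--Turowska transfer from the group to its anti-diagonal $E^{*}$, a routine set-theoretic identity, and the already-proved Theorem \ref{oc4}). The only minor point needing care is the verification that $E^{*}$ is $\omega$-closed so that Theorem \ref{oc4} is applicable, which as noted is immediate from second countability and continuity of the group operations.
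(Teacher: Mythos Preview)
Your proposal is correct and follows exactly the approach the paper uses: invoke the Ludwig--Turowska equivalence between local synthesis of $E$ and operator synthesis of $E^{*}$, use the identity $\theta^{-1}(E)^{*}=(\theta\times\theta)^{-1}(E^{*})$, and apply Theorem~\ref{oc4}. The only difference is that you spell out the set-theoretic identity and the $\omega$-closedness of $E^{*}$ explicitly, whereas the paper leaves these as understood.
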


\begin{remark}
 Note that, conversely, one could prove 
 that the conclusion of  theorem \ref{oc4} holds for subsets of $H \times H$ of
 the form $E^*$, where $E$ is a closed subset of $H$,  using 
 the result of Lohou\'{e} and  the above mentioned results of \cite{lt}.
 \end{remark}
 
\begin{theorem}\label{oc2} 
Let $G, H$ be locally compact second countable groups 
with Haar measures $\mu$ and $\nu$  respectively, $\theta$ be  a continuous 
onto homomorphism from $G$ to $H,$ and $\kappa\subseteq H\times H$ 
be an $\omega$-closed set. The set $\kappa$ is a set of compact operator synthesis  
if and only if $(\theta\times\theta)^{-1}(\kappa)$ is a set of compact operator synthesis.
\end{theorem}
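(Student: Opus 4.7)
The plan is to recognise this theorem as a direct special case of Theorem \ref{new}, taking $X_1 = X_2 = G$, $Y_1 = Y_2 = H$, and $\phi_1 = \phi_2 = \theta$. Theorem \ref{new} will then apply as soon as we verify that the push-forward measure $\theta_*(\mu)$ is equivalent to $\nu$.

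First I would establish the measure equivalence. Since $\theta$ is onto, $\theta(G) = H$ is trivially an open subgroup of $H$, so Theorem \ref{25} yields $\theta_*(\mu) \ll \nu$. Combining Corollary \ref{26} with the formula (\ref{*}) from the proof of Theorem \ref{25}, we obtain
$$\theta_*(\mu)(E) = c\,\lambda(\ker\theta)\,\nu(E)$$
for every Borel set $E\subseteq H$, where $c>0$ and $\lambda(\ker\theta)\in(0,+\infty]$. The product $c\,\lambda(\ker\theta)$ may be $+\infty$ when $\ker\theta$ is non-compact, but in either case $\theta_*(\mu)$ and $\nu$ share the same null sets, hence are equivalent. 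This is precisely the observation recorded in the paragraph following Theorem \ref{411}, so it can be cited rather than repeated.

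Second, I would invoke Theorem \ref{new} with the setup above; its conclusion is exactly the biconditional sought. I do not anticipate any serious obstacle: the technical content --- Lemma \ref{47}, Theorem \ref{equiv}(ii) and Theorem \ref{48} --- is already packaged inside Theorem \ref{new}, and the only conceptual step in this section is translating the group-theoretic hypothesis that $\theta$ is onto into the measure-theoretic hypothesis needed by Theorem \ref{new}. That translation is handled by Theorem \ref{25} and Corollary \ref{26}, as outlined above. As an alternative presentation, one could mirror the proof of Theorem \ref{oc4} verbatim, defining the TRO
$$\cl M=\{T\in\cl B(L^2(H),L^2(G)): TP(E)=Q(\theta^{-1}(E))T, \text{ for all Borel } E\subseteq H\}$$
and then using Theorem \ref{48} in combination with Theorem \ref{equiv}(ii) (in place of the pure Theorem \ref{equiv} invocation used for ordinary operator synthesis) to transfer $M_{\max}(\kappa)\cap\cl K$ and $M_{\min}(\kappa)\cap\cl K$ back and forth with their counterparts for $(\theta\times\theta)^{-1}(\kappa)$; both routes reach the same conclusion.
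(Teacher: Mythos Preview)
Your proposal is correct and follows essentially the same route as the paper: the paper's proof is just the two-line observation that the surjectivity of $\theta$ forces $\theta_*(\mu)$ and $\nu$ to be equivalent (as recorded after Theorem \ref{411}), and then applies Theorem \ref{new}. Your write-up is more detailed but identical in substance.
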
   

\begin{proof} In this case the measures $\theta_*(\mu)$ and $\nu$ are equivalent and the conclusion follows from  Theorem \ref{new}. 
\end{proof}
  

\section{Ditkin sets} \label{ditkin}

Operator Ditkin sets were first defined and studied by Shulman and Turowska in \cite{st}.
Ludwig and Turowska introduced the notion of strong operator Ditkin sets  in \cite{lt}.
In this section we prove an inverse image theorem for strong operator Ditkin sets.
 
Recall that, for a locally compact group $G$,  
the predual of $\cl B(L^2(G))$ may be identified with the space $T(G)$ 
of (equivalence classes of marginally a.e equal) 
functions $h: G\times G\to \mathbb C$ for which there exist sequences 
$(f_n)$ and $(g_n)$ in $L^2(G)$ such that 
$$
h(x,y)=\sum_{n=1}^\infty f_n(x)g_n(y), \;\;\sum_{n=1}^\infty \|f_n\|_2\|g_n\|_2<+\infty .
$$
The norm of such a function is 
$$
\|h\|_{T(G)}=\inf{\sum_{n=1}^\infty f_n(x)g_n(y)}
$$  
where the infimum is taken over all such sequences for which 
$$
h(x,y)=\sum_{n=1}^\infty f_n(x)g_n(y) 
$$
marginally a.e..
The duality between $T(G)$ and $\cl B(L^2(G))$ is given by the pairing 
$$
 \langle T,h\rangle := \sum_{n=1}^{\infty} \sca{Tf_n,\bar g_n}
$$
\cite[p. 494]{arv}.

Let $\mathfrak{S}(G)$ be the multiplier algebra of $T(G)$; by definition, a
measurable function $w : G\times G\rightarrow \bb{C}$ belongs to $\mathfrak{S}(G)$ if
the map $m_w: h\to wh$ leaves $T(G)$ invariant (that is, if
$wh$ is marginally equivalent to a function from $T(G)$,
for every $h\in T(G)$) and defines a bounded map on $T(G)$ \cite{spronk}
(see also \cite[Proposition 4.9]{ivan} where it is shown that the continuity of $m_w$ is automatic). 
The elements of $\mathfrak{S}(G)$ are called {\it (measurable) Schur multipliers}.

Fix locally compact second countable groups $G$ and $H$ with Haar measures 
$\mu$ and $\nu$ respectively. 

We will assume that the Fourier algebra $A(G)$ satisfies Ditkin's 
condition at infinity (or $D_\infty$), namely, that every $u\in A(G)$ belongs to 
$\overline{uA(G)}$. See Remark \ref{Dit} below.

We also assume that $\theta: G\rightarrow H$ is a continuous homomorphism 
such that $\ker \theta\subseteq G$ is compact  and $\theta(G)\subseteq H$ 
is open. Since $\theta(G)$ is an open subgroup, by Corollary \ref{26},
$\theta_*(\mu)$ is a Haar measure for $\theta(G)$, 
so (multiplying $\nu$ by a positive constant if necessary) we may assume that 
$\nu|_{\theta(G)}=\theta_*(\mu).$  
Observe that the map $L^2(H)\to L^2(G): f\mapsto f\circ \theta$ is contractive;
hence the map
$$T(H)\rightarrow T(G):\;\; h\rightarrow h\circ (\theta\times\theta)$$ 
is a contraction.

If $\kappa\subseteq G\times G$ is an $\omega$-closed set, we write  
$$\Psi(\kappa) = \{h\in T(G): h\chi_{\kappa} = 0 \mbox{ marginally a.e.}\}\, .$$
(This denoted by $\Phi_0(\kappa)$ in \cite{st}.)
By \cite[Theorem 4.3]{st},
\begin{equation}\label{eq_minmax}
M_{\min}(\kappa) = \Psi(\kappa)^{\perp} 
\end{equation}

Recall (see \cite{lt}) that an $\omega$-closed set $\kappa\subseteq G\times G$ 
is called a {\it strong operator Ditkin set} 
if there exists a sequence $(w_n)$ in $\mathfrak S(G)$ such that each $w_n$ vanishes in an 
$\omega$-neighbourhood of $\kappa$ and 
$$\|w_n h-h\|_{T(G)}\rightarrow 0,\;\;\text{for all }\;\;h\;\in\;\Psi(\kappa).$$ 

Let $E\subseteq H$ be a closed set such that $E^*$ is a strong operator Ditkin set. 
It follows from \cite[Theorem 5.4]{lt}  that $E$ is  a set of local spectral synthesis. 
Recall that a closed subset $E\subseteq G$ is called a {\it set of local spectral synthesis} if 
$J(E)$ contains every compactly supported function of $I(E)$.

 Theorem 5.3 in \cite{akt} implies that 
$$M_{\min}(\theta^{-1}(E)^*)={\rm Bim}(I_G( \theta^{-1}(E) )^\perp)$$ 
and  Corollary 2.5 in \cite{elee} implies that
 $\theta^{-1}(E)^*$ is a set of operator synthesis. Thus, 
$$ M_{\max}(\theta^{-1}(E)^*) =M_{\min}(\theta^{-1}(E)^*).$$
Now, if $\rho(u)=u\circ \theta$ for all $u\in A(H)$, from Theorem 3.4 in \cite{elee} we have that 
$$M_{\max}(\theta^{-1}(E)^*)={\rm Bim}( \langle\rho(I_H(E)\rangle ^\perp),$$
where $\langle\rho(I_H(E)\rangle $ is the closed ideal in $A(G)$ generated by $\rho(I_H(E)).$ Thus
$${\rm Bim}( I_G(\theta^{-1}(E))^\perp)={\rm Bim}( \langle\rho(I_H(E)\rangle^\perp).$$ 
Since $A(G)$ satisfies $D_\infty$, Lemma 4.5 in \cite{akt} implies that 
$$I_G(\theta^{-1}(E)=\langle\rho(I_H(E)\rangle.$$
Thus $$\Psi((\theta\times\theta)^{-1}(E^*))^\bot={\rm Bim}( \langle\rho(I_H(E)\rangle^\perp).$$

\begin{lemma}\label{lem47} 
The space $$\Psi((\theta\times\theta)^{-1}(E^*))$$ 
is  a subspace of  the masa-bimodule generated in $T(G)$  by the set 
$$ \{h\circ(\theta \times \theta): \;h\;\in \;\Psi(E^*)\} .$$
\end{lemma}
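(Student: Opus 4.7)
The strategy is duality combined with the TRO-equivalence machinery of Theorem \ref{equiv}. I write $\mathcal{W}$ for the masa-bimodule in $T(G)$ generated by $S:=\{h\circ(\theta\times\theta):h\in\Psi(E^*)\}$. Since $\mathcal{W}$ is norm-closed, Hahn--Banach duality combined with (\ref{eq_minmax}) will reduce the asserted inclusion to
$$\mathcal{W}^\perp\subseteq\Psi((\theta\times\theta)^{-1}(E^*))^\perp=M_{\min}((\theta\times\theta)^{-1}(E^*)).$$

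First I will characterize $\mathcal{W}^\perp$. Let $V\colon L^2(H)\to L^2(G)$, $Vf:=f\circ\theta$, which is a partial isometry with initial space $L^2(\theta(G))$ under the normalization $\nu|_{\theta(G)}=\theta_*(\mu)$; the same formula yields a contraction $V\colon T(H)\to T(G)$. The intertwining relation $M_{\phi\circ\theta}V=VM_\phi$ for $\phi\in L^\infty(H)$ and the definition of the masa-bimodule action give, by direct computation of the duality pairing,
$$\langle T,(\phi\otimes\psi)\cdot(h\circ(\theta\times\theta))\rangle_{T(G)}=\langle V^*M_\psi TM_\phi V,h\rangle_{T(H)},$$
for $T\in\cl B(L^2(G))$, $\phi,\psi\in L^\infty(G)$ and $h\in T(H)$. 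Consequently,
$$\mathcal{W}^\perp=\{T\in\cl B(L^2(G)):V^*M_\psi TM_\phi V\in M_{\min}(E^*)\ \text{for all }\phi,\psi\in L^\infty(G)\}.$$

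Next I will invoke the essential TRO $\cl M$ of Theorem \ref{equiv} corresponding to $\phi_1=\phi_2=\theta$, and write $V_0$ for the restriction of $V$ to $L^2(\theta(G))$. A direct verification shows that $M_\phi V_0\in\cl M$ for every $\phi\in L^\infty(G)$. The principal technical ingredient, which exploits the compactness of $\ker\theta$, will be the weak$^*$-density of $\{M_\phi V_0:\phi\in L^\infty(G)\}$ in $\cl M$: in the fibered description $\cl M\cong L^\infty(\theta(G),L^2(\ker\theta))$, a measurable $L^2$-section $\xi(x,\cdot)$ is approximated by the truncations $\xi_n(x,\cdot):=\xi(x,\cdot)\chi_{\{|\xi(x,\cdot)|\leq n\}}\in L^\infty(\ker\theta)$, which converge to $\xi(x,\cdot)$ in $L^2(\ker\theta)$-norm for a.e.\ $x$; dominated convergence then lifts this to weak$^*$-convergence of the corresponding operators. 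Combining this density with the previous characterization, the defining condition of $\mathcal{W}^\perp$ upgrades to $A^*TC\in M_{\min}(E^*)$ for all $A,C\in\cl M$.

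Finally, essentiality of $\cl M$ furnishes bounded nets $x_\lambda=\sum_iz_i^\lambda(z_i^\lambda)^*$ and $y_\mu=\sum_jw_j^\mu(w_j^\mu)^*$ with $z_i^\lambda,w_j^\mu\in\cl M$, converging strongly to $I_{L^2(G)}$. Expanding,
$$x_\lambda Ty_\mu=\sum_{i,j}z_i^\lambda\bigl((z_i^\lambda)^*Tw_j^\mu\bigr)(w_j^\mu)^*\in[\cl M\,M_{\min}(E^*)\,\cl M^*],$$
and $x_\lambda Ty_\mu\to T$ in the SOT and hence weak$^*$, which will yield $T\in\overline{[\cl M\,M_{\min}(E^*)\,\cl M^*]}^{w^*}=M_{\min}((\theta\times\theta)^{-1}(E^*))$ by Theorem \ref{equiv}(i). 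The main obstacle is the weak$^*$-density of $\{M_\phi V_0\}$ in $\cl M$, which is precisely where the compactness of $\ker\theta$ enters.
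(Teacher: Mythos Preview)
Your argument is correct but follows a genuinely different route from the paper's.

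The paper's proof is Fourier-algebraic. It relies on the entire discussion preceding the lemma, which combines \cite[Theorem 5.3]{akt}, \cite[Corollary 2.5 and Theorem 3.4]{elee}, and the $D_\infty$ hypothesis to obtain the identity
\[
\Psi((\theta\times\theta)^{-1}(E^*))=\overline{\mathrm{span}\{N(u\circ\theta)h:\;u\in I_H(E),\;h\in T(G)\}}^{\|\cdot\|_{T(G)}}.
\]
It then observes that $N(u\circ\theta)=N(u)\circ(\theta\times\theta)$, where $N(u)$ is a Schur multiplier vanishing on $E^*$, so each generator already lies in the masa-bimodule generated by $\{k\circ(\theta\times\theta):k\in\Psi(E^*)\}$. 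The compactness of $\ker\theta$ enters through Eymard's result that $u\circ\theta\in A(G)$.

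Your proof is purely operator-algebraic and stays within the TRO framework of Section~\ref{ontocomp}. You dualize via Hahn--Banach, identify $\mathcal W^\perp$ through the composition operator $V$, and then establish the key density of $\{M_\phi V_0:\phi\in L^\infty(G)\}$ in $\cl M$ by a fibrewise truncation argument. This is the crucial step, and your reasoning is sound: in the disintegration $\cl M\cong L^\infty(\theta(G),L^2(\ker\theta))$ the truncations $\xi_n$ are bounded by $\|\xi\|_{L^\infty(L^2)}$ (not merely by $n$), so the resulting net is norm-bounded and one may pass to the weak$^*$ limit inside the weak$^*$-closed space $M_{\min}$. The final appeal to essentiality and Theorem~\ref{equiv}(i) is exactly as in the proof of Lemma~\ref{47}.

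What each approach buys: the paper's argument is short once the preceding machinery is in place, but it imports the $D_\infty$ assumption and several external results from \cite{akt,elee,eym}. Your argument is more self-contained within the paper's own TRO technology and does not use $D_\infty$ at all; it isolates compactness of $\ker\theta$ as the sole structural input, entering both in the boundedness of $V$ and in the density of $L^\infty(\ker\theta)$ in $L^2(\ker\theta)$. One point you should make explicit when writing it out in full: since $\theta$ is not assumed surjective, $V$ factors as $V_0R$ with $R$ the projection onto $L^2(\theta(G))$, so the condition you obtain is $V_0^*M_\psi TM_\phi V_0\in M_{\min}(E^*\cap(\theta(G)\times\theta(G)))$, which matches the bimodule $F$ is applied to in Theorem~\ref{equiv}(i).
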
 

\begin{proof} We saw that   the space 
$ \Psi((\theta\times\theta)^{-1}(E)^*)$ is the  preannihilator
of ${\rm Bim}( \langle\rho(I_H(E)\rangle^\perp)$. 
By \cite[Theorem 3.2]{akt} this preannihilator  is  
$$\overline{{\rm span} \{N(\rho(u))h: u\in I_H(E),\;\;h\in T(G) \}}^{\|\cdot \|_{T(G)}}$$ 
where $N(f)(s,t):=f(ts^{-1})$ (see \cite[Proposition 3.1]{akt}). 

 Let $u\in I_H(E)$.  The restriction  $u|_{\theta(G)}$ of $u$ to $\theta(G)$ 
belongs to $A(\theta(G))$ \cite[Proposition 3.21(2)]{eym} and 
the function $u|_{\theta(G)}\circ \theta$ belongs to $A(G)$ \cite[Proposition 3.25(1)]{eym}. 
Since $u|_{\theta(G)}\circ \theta=u\circ \theta$, 
 $\rho(u)=u\circ \theta$ is  in $A(G)$ and therefore
$N(u)\circ(\theta\times\theta)$ vanishes m.a.e. on $(\theta\times\theta)^{-1}(E^*)$. 
It follows that  for all $h\in T(G)$, the function   
$(N(u)\!\circ\!(\theta\times\theta))h$ belongs to the masa-bimodule generated by the set 
$$\{k\!\circ\!(\theta \times \theta): \;k\;\in \;\Psi(E^*)\} .$$ 
\end{proof}

\begin{remark}\label{Dit}
Note that    Lemma 4.5 of [2], which we have used, 
remains true   if  instead of assuming that $A(G)$ has an approximate identity, we assume 
that $A(G)$ satisfies the formally weaker Ditkin's condition at infinity. 
This follows immediately from its proof. 
Ditkin's condition at infinity is a `local' condition, closer in spirit to reflexivity 
(as defined by Loginov-Shulman) rather than the `global' condition of an approximate identity.  
However  it is unknown if there are any locally compact groups failing either condition.
 We note that a characterization of  Ditkin's condition at infinity related to our discussion 
 is given by Andreou \cite{DA}.
\end{remark}

\begin{theorem}\label{th48} 
Let $G$ and $H$ be locally compact second countable groups 
such that $A(G)$ satisfies $D_\infty$ and 
let $\theta: G\rightarrow H$ be a continuous homomorphism 
such that $\ker \theta\subseteq G$ is compact  and $\theta(G)\subseteq H$ 
is open. Let $E\subseteq H$ be a closed subset. If $E^*$ is a strong operator Ditkin set, then 
$(\theta^{-1}(E))^*$ is also a strong operator Ditkin set.
\end{theorem}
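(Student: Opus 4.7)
The plan is to pull back the witnessing Schur multiplier sequence from $H$ to $G$ via composition with $\theta\times\theta$. Let $(w_n)\subseteq\mathfrak{S}(H)$ be a sequence witnessing that $E^*$ is a strong operator Ditkin set: each $w_n$ vanishes on some $\omega$-open neighbourhood $V_n$ of $E^*$ and $\|w_nk-k\|_{T(H)}\to 0$ for every $k\in\Psi(E^*)$. Set $\tilde w_n:=w_n\circ(\theta\times\theta)$. I will verify the three requirements of the strong operator Ditkin property for $\theta^{-1}(E)^*=(\theta\times\theta)^{-1}(E^*)$.

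First, I would check that $\tilde w_n\in\mathfrak{S}(G)$ with $\|\tilde w_n\|_{\mathfrak{S}(G)}\le\|w_n\|_{\mathfrak{S}(H)}$. Using the Peller/Grothendieck factorization, write $w_n(s,t)=\langle a_n(s),b_n(t)\rangle$ with bounded Borel $a_n,b_n:H\to\ell^2$ whose product of sup-norms approximates $\|w_n\|_{\mathfrak{S}(H)}$; then $\tilde w_n(x,y)=\langle a_n(\theta(x)),b_n(\theta(y))\rangle$ exhibits the same factorization with $a_n\circ\theta,b_n\circ\theta$ bounded Borel on $G$. Next, observe that $V_n$ is marginally equivalent to a countable union $\bigcup_i A_i\times B_i$ of Borel rectangles in $H\times H$; the preimage $(\theta\times\theta)^{-1}(V_n)$ is then marginally equivalent to $\bigcup_i\theta^{-1}(A_i)\times\theta^{-1}(B_i)$, since $\theta_*(\mu)\ll\nu$ (Lemma \ref{22} and Corollary \ref{26}) guarantees that $\nu$-null sets pull back to $\mu$-null sets. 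Thus $(\theta\times\theta)^{-1}(V_n)$ is an $\omega$-open neighbourhood of $(\theta\times\theta)^{-1}(E^*)$ on which $\tilde w_n$ vanishes.

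The core step is the convergence $\|\tilde w_nh-h\|_{T(G)}\to 0$ for every $h\in\Psi((\theta\times\theta)^{-1}(E^*))$. By Lemma \ref{lem47}, any such $h$ lies in the $T(G)$-closure of the algebraic masa-bimodule generated by $\{k\circ(\theta\times\theta):k\in\Psi(E^*)\}$. For a single generator $h=\phi(x)\psi(y)(k\circ(\theta\times\theta))$ with $\phi,\psi\in L^\infty(G)$ and $k\in\Psi(E^*)$ one has $\tilde w_nh-h=\phi(x)\psi(y)\bigl((w_nk-k)\circ(\theta\times\theta)\bigr)$, and combining the contractivity of $k\mapsto k\circ(\theta\times\theta):T(H)\to T(G)$ recalled in the excerpt with the Schur multiplier bound $\|\phi\otimes\psi\|_{\mathfrak{S}(G)}\le\|\phi\|_\infty\|\psi\|_\infty$ gives $\|\tilde w_nh-h\|_{T(G)}\le\|\phi\|_\infty\|\psi\|_\infty\|w_nk-k\|_{T(H)}\to 0$; by linearity this extends to the full algebraic bimodule. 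The main obstacle is then the passage to its $T(G)$-closure, which requires uniform boundedness of $(\tilde w_n)$ in $\mathfrak{S}(G)$. By the first step this reduces to uniform boundedness of $(w_n)$ in $\mathfrak{S}(H)$, which is part of the strong operator Ditkin hypothesis (or can be arranged by replacing the witnessing sequence); a standard $3\varepsilon$-argument then concludes.
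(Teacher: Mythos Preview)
Your approach is essentially the paper's: pull back the witnessing Schur multipliers via $\theta\times\theta$, use the contractivity of $T(H)\to T(G)$ under composition with $\theta\times\theta$, and invoke Lemma~\ref{lem47} to reduce the convergence on $\Psi((\theta\times\theta)^{-1}(E^*))$ to convergence on generators of the form $(k\circ(\theta\times\theta))w$. You are in fact more explicit than the paper about why $\tilde w_n\in\mathfrak S(G)$, why the preimage of an $\omega$-neighbourhood is $\omega$-open, and about the need for uniform boundedness of the $w_n$ to pass from the algebraic bimodule to its closure---a subtlety the paper's own proof does not address.
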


\begin{proof} 
Writing $H_1=\theta (G)$ for brevity, we can easily see that 
$$\Psi((E\cap \theta(G))^*)=\zeta_0 \Psi(E^*),$$ 
where $\zeta_0=\chi_{H_1\times H_1}$. Since $E^*$ is an operator Ditkin set, there exists 
a sequence  $(w_n)\subseteq\mathfrak S (H)$ such that $w_n$ vanishes in an 
$\omega$-neighbourhood of $E^*$ and 
$$\|w_n h-h\|_{T(H)}\rightarrow 0,\;\;\text{for all }\;\;h\;\in\;\Psi(E^*).$$
Thus 
$$
\|w_nh\zeta_0-h\zeta_0\|_{T(H_1)}\rightarrow 0,
\;\;\text{for all }\;\;h\;\in\;\Psi(E^*)$$
and so 
 $$\|w_nh_1-h_1\|_{T(H_1)}\rightarrow 0,
 \;\;\text{for all }\;\;h_1\;\in\;\Psi((E\cap H_1)^*).$$
Since the map 
$$T(H_1)\rightarrow T(G): h_1\rightarrow h_1\circ (\theta \times\theta)$$
is a contraction we have 
$$
\|(w_n\circ (\theta \times\theta)(h_1\circ (\theta \times\theta))
-h_1\circ (\theta \times\theta))\|_{T(G)}\rightarrow 0,\;\;\text{for all }\;\;h_1\;\in\;\Psi((E\cap H_1)^*)
$$
and, a fortiori,
$$
\|(w_n\circ (\theta \times\theta)(h_1\circ (\theta \times\theta))w
-h_1\circ (\theta \times\theta))w\|_{T(G)}\rightarrow 0,$$
for all $h_1\;\in\;\Psi((E\cap H_1)^*)$ and $w\in\mathfrak S(G)$.
But now if $h_2\in \Psi((E\cap H_1)^*)=\Psi( (\theta \times\theta)^{-1}(E^*))$ 
then, by Lemma \ref{lem47}, $h_2$ is in the masa bimodule  
$\{h\circ(\theta\times\theta)w: h\in \Psi(E^*), w\in\mathfrak S(G)\}$ 
(since masa bimodules are invariant under Schur multipliers) and so we have 
$$
\|(w_n\circ (\theta\times\theta))h_2-h_2\|_{T(H)}\rightarrow 0,\;\;
\text{for all }\;\;h_2\;\in\;\Psi((\theta\times\theta)^{-1}(E^*)).
$$
Since  $w_n\circ (\theta\times\theta)$ vanishes in an $\omega$-neighbourhood of 
$(\theta\times\theta)^{-1}(E^*) = (\theta^{-1}(E))^*$, we conclude that 
$(\theta^{-1}(E))^*$ is a strong operator Ditkin set. 
 \end{proof}
 
 \begin{remark}
Note, for comparison, that Parthasarathy and Kumar have proved in \cite{pk} that if $G$ 
is a compact group and $H$ a normal subgroup  of $G$, then a closed subset $E$ of $G/H$ 
is a strong Ditkin set if and only if $\pi^{-1}(E)$ is a strong Ditkin set, 
where $\pi: G\rightarrow G/H$ is the quotient map.
 \end{remark}
 
\begin{remark}\label{auto} 
The above theorem together with Theorem 5.4 in \cite{lt} implies the following: 
Let $G$ and $H$ be locally compact second countable groups such that 
$A(G)$ satisfies  $D_\infty$ and let $\theta: G\rightarrow H$ 
be a continuous homomorphism 
such that $\ker \theta\subseteq G$ is compact  and $\theta(G)\subseteq H$ 
is open. Let $E\subseteq H$ be a closed subset. If $E^*$ is a strong operator Ditkin set then 
$\theta^{-1}(E)$ is a local Ditkin set.
\end{remark}


\section{M-sets}\label{msets}
The concept of $M$-sets for general locally compact groups  was introduced by Bo\.{z}ejko in \cite{bo}.
Shulman, Todorov and Turowska   introduced in \cite{stwt} the notion of 
$M_1$ sets in locally compact groups   and studied transference and preservation 
properties of $M$-sets and $M_1$-sets  and their operator analogues, 
which had been  introduced in \cite{stwtint}.

A closed subset $E\subseteq G$ is called a {\it set  of multiplicity} (or an {\it $M$-set})
if $C_r^*(G)\cap J(E)^\perp \ne \{0\}$. 
The set $E$ is called an {\it $M_1$-set} if $C_r^*(G)\cap I(E)^\perp \ne \{0\}$.

An $\omega$-closed set $\kappa \subseteq G \times G$ is called an {\it operator $M$-set} 
if $M_{\max}(\kappa)$ contains nonzero compact operators, and is called 
an {\it operator $M_1$-set} if $M_{\min}(\kappa)$ contains nonzero compact operators.
\smallskip

In the following theorem we show that a set $\kappa$ is an operator M-set if and only if 
$(\theta\times\theta)^{-1}(\kappa)$ is an operator M-set. 
The `only if' direction of this result is proved in \cite{stwt} 
under the  assumption that $\theta_*(\mu)$ has a Radon-Nikodym derivative which is 
$\mu$-a.e. finite, which in turn is equivalent to 
the compactness of $\ker\theta$ (see Corollary \ref{26}).

\begin{theorem}\label{oc2ii} 
Let $G, H$ be locally compact second countable groups with Haar measures 
$\mu$ and $\nu$  respectively,  $\theta$ be  a continuous 
onto homomorphism from $G$ to $H,$ and $\kappa\subseteq H\times H$ 
be an $\omega$-closed set. 
 
 Then the set $\kappa$ is an operator $M$-set  (resp. $M_1$-set)  if and only if 
$(\theta\times\theta)^{-1}(\kappa)$ is an operator $M$-set  (resp. $M_1$-set).  
\end{theorem}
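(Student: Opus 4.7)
My plan is to use Theorem~\ref{48}, applied with $\phi_1 = \phi_2 = \theta$, to convert the statements about $M_{\max}$ and $M_{\min}$ containing nonzero compact operators into a statement about an associated bimodule, and then to resolve that statement with a short essentiality argument.

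Because $\theta$ is onto, the measures $\theta_*(\mu)$ and $\nu$ are equivalent (as noted after Theorem~\ref{411}). In the notation of Section~\ref{ontocomp} this forces $E = H$ and $R = I$, so the TROs $\cl M$ and $\cl N$ both coincide with
$$\cl M := \{T \in \cl B(L^2(H), L^2(G)) : TP(B) = Q(\theta^{-1}(B))T \text{ for every Borel } B \subseteq H\}.$$
Under this identification, Theorem~\ref{48} specialises to
\begin{align*}
M_{\max}((\theta\times\theta)^{-1}(\kappa)) \cap \cl K &= \overline{[\cl M(M_{\max}(\kappa)\cap\cl K)\cl M^*]}^{\|\cdot\|}, \\
M_{\min}((\theta\times\theta)^{-1}(\kappa)) \cap \cl K &= \overline{[\cl M(M_{\min}(\kappa)\cap\cl K)\cl M^*]}^{\|\cdot\|}.
\end{align*}
Both assertions of the theorem therefore reduce to showing that, for any $w^*$-closed subspace $\cl U \subseteq \cl B(L^2(H))$,
$$\cl U \cap \cl K = \{0\} \iff \overline{[\cl M(\cl U \cap \cl K)\cl M^*]}^{\|\cdot\|} = \{0\}.$$

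The $(\Leftarrow)$ direction of this general equivalence is immediate. For $(\Rightarrow)$, the main point is that essentiality of $\cl M$ forces $\cl M L^2(H)$ to be dense in $L^2(G)$ and $\cl M^* L^2(G)$ to be dense in $L^2(H)$: a vector $\zeta \in L^2(G)$ orthogonal to $\cl M L^2(H)$ satisfies $v^*\zeta = 0$ for every $v \in \cl M$, hence $S\zeta = 0$ for every $S \in [\cl M\cl M^*]$, and $I_{L^2(G)} \in \overline{[\cl M\cl M^*]}^{w^*}$ then forces $\zeta = 0$; the other density is analogous. Now suppose $0 \neq T \in \cl U \cap \cl K$ satisfied $vTw^* = 0$ for all $v, w \in \cl M$. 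Density in $L^2(H)$ would give $vT\eta = 0$ for every $v \in \cl M$ and $\eta \in L^2(H)$, whence $T^*v^*\zeta = 0$ for all $v \in \cl M$ and $\zeta \in L^2(G)$, and a second application of density forces $T^* = 0$, a contradiction. The resulting nonzero $vTw^* \in \cl M(\cl U \cap \cl K)\cl M^*$ is compact since $T$ is.

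Applying the dichotomy with $\cl U = M_{\max}(\kappa)$ gives the operator $M$-set equivalence, and with $\cl U = M_{\min}(\kappa)$ the operator $M_1$-set equivalence. I do not anticipate a genuine obstacle: the nontrivial ``only contains the zero compact operator'' transfer is exactly the essentiality argument above, which is a minor variant of the reasoning already carried out in the proof of Lemma~\ref{47}; everything else is an unpacking of Theorem~\ref{48}.
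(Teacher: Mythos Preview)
Your argument is correct and follows essentially the same route as the paper: set up the TRO $\cl M$, identify $M_{\max/\min}((\theta\times\theta)^{-1}(\kappa))\cap\cl K$ as the norm-closed $\cl M$-sandwich of $M_{\max/\min}(\kappa)\cap\cl K$, and then use essentiality of $\cl M$ to show the sandwich kills no nonzero operator. One cosmetic slip: your labels $(\Leftarrow)$ and $(\Rightarrow)$ are swapped---the direction ``$\cl U\cap\cl K=\{0\}\Rightarrow \overline{[\cl M(\cl U\cap\cl K)\cl M^*]}^{\|\cdot\|}=\{0\}$'' is the trivial one, while the essentiality argument you give actually establishes the contrapositive of $(\Leftarrow)$.
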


\begin{proof}  
As we noted in section \ref{sec4},  the condition $\theta (G)=H$ is equivalent 
to the requirement that the measures $\theta_*(\mu)$ and $\nu$ are equivalent, 
even if $\theta_*(\mu)$ is not $\sigma$-finite. 

We define the TRO 
$$
\cl M=\{x: xP(E)
=Q(\theta ^{-1}(E))x, \;\;\text{for all }\; \; E\subseteq H\;\; \text{Borel}\},
$$
where $P(E)$ is the projection onto $L^{2}(E, \nu)$ and $Q(\theta^{-1}(E))$ 
is the projection onto $L^{2}(\theta^{-1}(E), \mu).$ 
 From Theorem \ref{equiv} 
and Lemma  \ref{47} we have that
$$
M_{\max}((\theta\times\theta)^{-1}(\kappa))\cap \cl K 
=\overline{[\cl M( M_{\max}(\kappa)\cap \cl K) \cl M^{*}]}^{\|\cdot\|}, 
$$
$$
M_{\max}(\kappa)\cap \cl K 
=\overline{[\cl M^{*}( M_{\max}((\theta\times\theta)^{-1}(\kappa))\cap \cl K) \cl M]}^{\|\cdot\|}, 
$$
$$
M_{\min}((\theta\times\theta)^{-1}(\kappa))\cap \cl K 
=\overline{[\cl M( M_{\min}(\kappa)\cap \cl K) \cl M^{*}]}^{\|\cdot\|}, 
$$
$$
M_{\min}(\kappa)\cap \cl K
=\overline{[\cl M^{*}( M_{\min}((\theta\times\theta)^{-1}(\kappa))\cap \cl K) \cl M]}^{\|\cdot\|}. 
$$
If $\kappa$ is an operator $M$-set then $M_{\max}(\kappa)$ contains a non-zero compact operator 
$x$.  We claim that $\cl Mx\cl M^{*}\neq \{0\}.$ 
Indeed: assume to the contrary that $\cl Mx\cl M^{*}=\{0\}.$ Then 
$\overline{[\cl M^{*}\cl M]}^{w^*}x\overline{[\cl M^{*}\cl M]}^{w^*}=\{0\}.$ 
By \cite[Lemma 3.1]{ele} the algebra $\overline{[\cl M^{*}\cl M]}^{w^*}$ 
contains the identity operator and thus $x=0.$ This contradiction shows that  
$\cl Mx\cl M^{*}\neq \{0\}$ which implies that 
$M_{\max}((\theta\times\theta)^{-1}(\kappa))\cap \cl K\neq \{0\}.$
Therefore the set $(\theta\times\theta)^{-1}(\kappa)$ is an operator $M$-set. 

Reversing the roles of $\cl M$ and $\cl M^*$, we show that if $(\theta\times\theta)^{-1}(\kappa)$ 
is an operator $M$-set then $\kappa$ is an operator $M$-set. 

Similarly we can prove that $\kappa$ is an operator $M_1$-set  
if and only if $(\theta\times\theta)^{-1}(\kappa)$ is an operator $M_1$-set.  
\end{proof}
  
In \cite{stwt}, the authors showed that if $\theta: G\rightarrow H$ is 
a  continuous homomorphism onto $H$ and $E$ is a closed subset of $H$ which  is an $M$-set 
(resp. an  $M_1$-set) then  $\theta^{-1}(E)$ is also an $M$-set (resp. an  $M_1$-set)
under the  assumption that $\theta_*(\mu)$ has a Radon-Nikodym derivative which is 
$\mu$-a.e. finite, which in turn is equivalent to 
the compactness of $\ker\theta$ (see Corollary \ref{26}). 

In \cite{elee} it was proved that if $\theta: G\rightarrow H$ is 
an open continuous homomorphism and $\theta^{-1}(E)$ is an $M$-set 
(resp. an  $M_1$-set) then $E$ is also an $M$-set (resp. an  $M_1$-set). 

It follows from  \cite{stwt}  that a closed set $E\subseteq G$ is an $M$-set 
(resp. an $M_1$-set) if and only if $E^{*}\subseteq G\times G$ is an operator $M$-set 
(resp. an operator $M_1$-set). Hence  Theorem \ref{oc2ii} implies   Corollary \ref{oc3} 
below, which  for $M$-sets  is due to Derighetti \cite{der}.
 We note that conversely, one could prove 
 that the conclusion of  theorem \ref{oc2ii} holds for subsets of $H \times H$ of
 the form $E^*$, where $E$ is a closed subset of $H$,  using 
 the result of Derighetti and  the above mentioned result of \cite{stwt}.
 
\begin{corollary}\label{oc3} 
Let $G, H$ be locally compact second countable groups, $\theta$ 
be  a continuous onto homomorphism from $G$ to $H,$ and $E\subseteq H$ be a closed set. 
Then $E$ is an $M$-set  (resp. an $M_1$-set)  if and only if $\theta^{-1}(E)$ is an $M$-set  
(resp. an $M_1$-set).  
\end{corollary}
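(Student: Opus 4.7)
The plan is to reduce Corollary \ref{oc3} to the operator-valued statement already established in Theorem \ref{oc2ii}, using the translation between closed sets in a group and $\omega$-closed subsets of the product via the map $E\mapsto E^*$.

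First I would recall (citing \cite{stwt}) the bridge between the classical and operator notions: a closed set $F\subseteq G$ is an $M$-set (resp.\ $M_1$-set) if and only if $F^*\subseteq G\times G$ is an operator $M$-set (resp.\ operator $M_1$-set). Applying this to $H$ with $F=E$, we see that $E$ being an $M$-set (resp.\ $M_1$-set) is equivalent to $E^*\subseteq H\times H$ being an operator $M$-set (resp.\ operator $M_1$-set). Applying it to $G$ with $F=\theta^{-1}(E)$ gives the analogous equivalence for $\theta^{-1}(E)$ and $(\theta^{-1}(E))^*$.

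Next I would verify the set-theoretic identity
\[
(\theta^{-1}(E))^* = (\theta\times\theta)^{-1}(E^*),
\]
which is immediate from the definitions: $(s,t)\in (\theta^{-1}(E))^*$ means $ts^{-1}\in\theta^{-1}(E)$, i.e.\ $\theta(t)\theta(s)^{-1}\in E$, which says exactly $(\theta(s),\theta(t))\in E^*$. In particular $(\theta^{-1}(E))^*$ is $\omega$-closed as the preimage under $\theta\times\theta$ of an $\omega$-closed (in fact closed) set, so Theorem \ref{oc2ii} applies to $\kappa=E^*$.

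Finally, I would chain the equivalences: $E$ is an $M$-set iff $E^*$ is an operator $M$-set (by \cite{stwt}), iff $(\theta\times\theta)^{-1}(E^*)=(\theta^{-1}(E))^*$ is an operator $M$-set (by Theorem \ref{oc2ii}), iff $\theta^{-1}(E)$ is an $M$-set (again by \cite{stwt}). The same chain, with $M_{\max}$ replaced by $M_{\min}$ throughout, handles $M_1$-sets. There is no genuine obstacle here, since all the substantive work is done in Theorem \ref{oc2ii}; the only minor point worth stating carefully is the identity $(\theta^{-1}(E))^*=(\theta\times\theta)^{-1}(E^*)$, and the fact that $E^*$ is automatically $\omega$-closed whenever $E$ is closed, so that Theorem \ref{oc2ii} is applicable.
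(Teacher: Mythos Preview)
Your proposal is correct and follows essentially the same approach as the paper: reduce to Theorem \ref{oc2ii} via the equivalence from \cite{stwt} between $E$ being an $M$-set (resp.\ $M_1$-set) and $E^*$ being an operator $M$-set (resp.\ operator $M_1$-set), together with the identity $(\theta^{-1}(E))^*=(\theta\times\theta)^{-1}(E^*)$. The paper states this argument in the paragraph preceding the corollary rather than in a separate proof environment, and is slightly terser about the set-theoretic identity, but the content is the same.
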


\begin{remark} \label{sets} 
In case $\theta$ is not onto, Corollary \ref{oc3} does not necessarily hold even if 
$\theta$ is an open map with compact kernel. 
For example take $G=\mathbb Z_2$ and 
$$\theta: G\rightarrow G, \;\theta(x)=0,\;\;\text{for all } \;x\in G.$$
Consider the set $E=\{1\}.$ This is an $M$-set and an $M_1$-set, but $\theta^{-1}(E)=\emptyset$ 
is not an $M$-set or an $M_1$-set.
\end{remark}

\end{document}